\theoremstyle{plain}
\newtheorem{theorem}{Theorem}[section]
\newtheorem{lemma}[theorem]{Lemma}
\newtheorem{corollary}[theorem]{Corollary}
\newtheorem{proposition}[theorem]{Proposition}
\newtheorem{definition}{Definition}[section]
\newtheorem*{namedthm}{\namedthmname}
\newcounter{namedthm}
\newenvironment{named}[1]
  {\def\namedthmname{#1}%
   \refstepcounter{namedthm}%
   \namedthm\def\@currentlabel{#1}}
  {\endnamedthm}
\theoremstyle{definition}
\newtheorem{remark}[theorem]{Remark}
\newcommand{\tr}{\operatorname{tr}}
\renewcommand{\leq}{\leqslant}
\renewcommand{\geq}{\geqslant}
\newcommand{\N}{\mathbb{N}} % Natural numbers
\newcommand{\R}{\mathbb{R}} % Real numbers
\newcommand{\bbS}{\mathbb{S}}
\newcommand{\cB}{\mathcal{B}}
\newcommand{\cC}{\mathcal{C}}
\newcommand{\cL}{\mathcal{L}}
\newcommand{\cM}{\mathcal{M}}
\newcommand{\cO}{\mathcal{O}}
\newcommand{\cR}{\mathcal{R}}
\newcommand{\cT}{\mathcal{T}}
\newcommand{\norm}[1]{\left\vert\kern-0.25ex\left\vert #1 \right\vert\kern-0.25ex\right\vert}
\newcommand{\normt}[1]{\left\vert\kern-0.25ex\left\vert\kern-0.25ex\left\vert #1 \right\vert\kern-0.25ex\right\vert\kern-0.25ex\right\vert}
\newcommand{\inprod}[2]{\left\langle #1, #2 \right \rangle}
\newcommand{\di}{\,\mathrm{d}}
\DeclareMathOperator{\dist}{dist}
\DeclareMathOperator{\Ker}{Ker}
\newcommand{\Crit}{\mathcal{C}\mathcal{M}\mathcal{C}}
\title[Quantitative Stability for Yamabe minimizers with boundary]{Quantitative Stability for Yamabe minimizers on manifolds with boundary}
\author[B. Borquez]{Benjam\'in Borquez}
\address{Departamento de Ingenier\'ia Matem\'atica, Beauchef 851, Santiago, Chile}
\email{benjamin.borquez@ug.uchile.cl}
\author[R. Caju]{Rayssa Caju}
\address{Departamento de Ingenier\'ia Matem\'atica and Centro
de Modelamiento Matem\'atico (CNRS IRL 2807), Universidad de Chile, Beauchef 851, Santiago, Chile}
\email{rcaju@dim.uchile.cl}
\author[H. Van Den Bosch]{Hanne Van Den Bosch}
\address{Departamento de Ingenier\'ia Matem\'atica and Centro
de Modelamiento Matem\'atico (CNRS IRL 2807), Universidad de Chile, Beauchef 851, Santiago, Chile}
\email{hvdbosch@dim.uchile.cl}
\date{\today}
\begin{document}

\begin{abstract}
This paper addresses the quantitative stability for a Yamabe-type functional on compact manifolds with boundary introduced by Escobar. Minimizers of the functional correspond to scalar-flat metrics with constant mean curvature on the boundary.
We prove that the deficit controls the distance to the minimizing set to a suitable power by reducing the problem to the analogous question for an effective functional on the boundary.
\end{abstract}

\maketitle
%\tableofcontents

\section{Introduction}
\noindent
%\subsection{Motivation}

%%%%%%%%%%%%%%%%%%%%%%%%%%%% Yamabe problem %%%%%%%%%%%%%%%%%%%%%%%%%%
The classical Yamabe problem asks whether, given a closed Riemannian manifold $(M,g)$, it is possible to find a conformal metric $\tilde{g}$ to $g$, such that the scalar curvature of $\tilde{g}$ is constant. Writing the conformal factor as $\tilde{g} = u^{\frac{4}{n-2}}g$, this problem is equivalent to finding a positive solution $u >0$ to the equation
\begin{equation*}
      \frac{4(n-1)}{n-2} \Delta_{g} u - R_{g} u + n(n-1)u^{\frac{n+2}{n-2}} = 0, 
\end{equation*}
where $\Delta_{g}$ denotes the Laplace-Beltrami operator of the metric $g$ and $R_{g}$ is the scalar curvature.
The Yamabe problem was solved by the combined efforts of the works of Yamabe \cite{yamabe1960deformation}, Trudinger \cite{trudinger1968remarks}, Aubin\cite{aubin1976equations} and Schoen \cite{schoen1984conformal}. Solutions to this problem are critical points of the Yamabe quotient
\begin{equation}\label{YQ}
    Q(u) = \frac{\int_{M}(c_{n} |\nabla u|^{2} + R_{g} u^{2})d\text{vol}_{g}}{(\int_{M}u^{2^{*}}d\text{vol}_{g})^{2/2^{*}}}.
\end{equation}
where $c_{n} = 4(n-1)/n-2$. A conformally invariant quantity that plays a key role in the resolution of this problem is known as the Yamabe invariant, and it is defined as
\begin{equation*}
    Y(M,[g]) := \inf \{ Q(u) : u \in H^{1}(M,\mathbb{R}_{+})\}.
\end{equation*}

Once the properties of the existence of minimizers and their structure are understood, a natural question is the stability of the inequality: \emph{if $Q(u)$ is close to the infimum $Y(M,[g])$, does this imply that $u$ is close to a minimizer}?

For the case of the round sphere, the answer is afirmative.
By combining the classical Bianchi-Egnell's inequality \cite{BianchiEgnell}, 
with the conformal equivalence between the Euclidean space and the round sphere, it is possible to bound from below the deficit between the Yamabe quotient and its minimum $Q_{(\mathbb{S}^{n},g_{0})}(u) - Y(\mathbb{S}^{n},g_{0})$, for a given function $u \in W^{1,2}(\mathbb{S}^{n})$, in terms of the square of a distance to the family of minimizers $\mathcal{M}(\mathbb{S}^{n},g_0)$ of $\eqref{YQ}$. In the case of the round sphere, this class of minimizers is explicitly characterized by the works of Aubin \cite{aubin1976equations} and Talenti \cite{talenti1976best}.

In an elegant argument combining a Lyapunov-Schmidt reduction with the finite-dimensional version of the \L ojasiewicz inequality, Engelstein, Neumayer, and Spolaor obtained in \cite{engelstein2022quantitativestabilityminimizingyamabe} a generalization of the Bianchi-Egnell's result for any closed Riemannian manifold $(M,g)$ with dimension $n\geq 3$. They established the following quantitative stability result for the Yamabe minimizers.
\begin{named}{Theorem A}\cite{engelstein2022quantitativestabilityminimizingyamabe}\label{robin}
   Let $(M,g)$ be a closed Riemannian manifold of dimension $n\geq 3$ that is not conformally equivalent to the round sphere. Then there exists constants $
   C>0$ and $\gamma \geq 0$, depending on $(M,g)$, such that:
\begin{equation*}
    Q(u)-Y(M,[g]) \geq 
    C d(u,\mathcal{M})^{2+\gamma} \quad \forall u \in H^{1}(M,\mathbb{R}_{+}),
\end{equation*}
Where $d$ is a suitable bounded distance to the set of minimizers 
$\cM$.
Moreover, there exists an open dense subset $\mathcal{G}$ in the $C^{2}$ topology on the space of smooth-conformal classes of metrics on $M$ such that if $[g] \in \mathcal{G}$, we may take $\gamma = 0$. 
\end{named}

Furthermore, as a consequence of their result, a conformally invariant stability inequality is established. While quadratic stability (i.e., the case where $\gamma = 0$) is generic, their result is, in fact, optimal, as the existence of manifolds with a strictly positive exponent $\gamma$ is proven.

Motivated by the question mentioned above, our purpose in this paper is to establish a trace-type quantitative stability result for a compact Riemannian manifold \emph{with boundary}.
More precisely, we aim to prove a quantitative stability result for the energy functional associated with the formulation for the Yamabe problem proposed in the seminal work of J. Escobar \cite{escobar1992conformal}. In this case, we look for a conformal metric which is scalar-flat and has a constant mean curvature on the boundary.

More precisely, let $(M,g)$ be a compact Riemannian manifold with boundary $\partial M$ of dimension $n \geq 3$. By writing the conformal factor $\tilde{g} = u^{\frac{4}{n-2}}g$, the problem proposed above is equivalent to finding a positive solution to the nonlinear boundary-value problem
\begin{equation}\label{YPB}
     \begin{cases}
        -\frac{4(n-1)}{n-2} \Delta _{g} u + R_{g} u = 0 & M \\
        \frac{4(n-1)}{n-2}\frac{\partial u}{\partial \nu} + 2(n-1) h_{g} u = 2(n-1)cu^{\frac{n}{n-2}} & \partial M
   \end{cases}
\end{equation}
where here, $h_{g}$ is the mean curvature of the boundary with respect to $g$, $\nu$ is the outward unit normal vector to $\partial M$, and $c$ is a constant. In fact, given a solution $u$ to problem $\eqref{YPB}$, the metric $\tilde{g} = u^{\frac{4}{n-2}}g$ is scalar-flat and its boundary mean curvature is equal to $c$. This is a variational problem, associated with the Escobar functional
\begin{equation}\label{EYPB}
    Q(u) = \frac{\int_{M}(c_{n} |\nabla u|^{2} + R_{g} u^{2})d\text{vol}_{g} + \int_{\partial M} 2(n-1)h_{g}u^{2}d\sigma_{g}}{\left(\int_{\partial M}|u|^{2\frac{(n-1)}{(n-2)}}d\sigma_{g}\right)^{(n-2)/(n-1)}}.
\end{equation}
The exponent $\frac{2(n-1)}{n-2}$ is critical for the Sobolev trace embedding $H^{1}(M) \hookrightarrow L^{\frac{2(n-1)}{n-2}}(\partial M)$.
In \cite{escobar1992conformal}, Escobar considered the \emph{Sobolev quotient} associated with this version of the Yamabe problem for manifolds with boundary, which is a conformally invariant quantity defined as
 \begin{equation} \label{eq:def_Q}
    Q(M,\partial M) := \inf \{Q(u): u \in C^{1}(\overline{M}), u > 0 \}.
\end{equation}
The sign of $Q(M,\partial M)$ is equal to the sign of the boundary mean curvature $c$.

Escobar showed that $Q(M,\partial M) \leq Q(B^{n},\partial B^{n})$, where $B^{n}$ denotes the unit ball in $\R^{n}$ endowed with the Euclidean metric. It was also proven that if \( Q(M, \partial M) \) is finite and the inequality is strict, then there exists a minimizing solution to the problem 
\eqref{YPB}. In this first paper, Escobar guaranteed existence of solutions to a large class of manifolds. Combinations of the works of Cherrier \cite{cherrier1984problemes}, Escobar \cite{escobar1992conformal,escobar1996conformal}, Marques \cite{marques2005existence,marques2007conformal}, Almaraz \cite{almaraz2010existence} and Chen \cite{chenconformal}, showed the existence of a minimizer for functional $Q$ for many types of manifolds. The remaining cases were covered by a work of Mayer and Ndiaye \cite{mayer2017barycenter}, using the barycenter technique of Bahri and Coron. 
Summing up, the strict inequality holds for any manifold with boundary in dimensions $n\ge 3$ with finite invariant $Q(M,\partial M)$ and which is not conformally equivalent to the ball.

To state our result, we fix a compact manifold with boundary $(M,\partial M)$ and a reference metric, and denote by $\cM$ the set of minimizers of \eqref{eq:def_Q}.
For any $u\in H^1(M)$, we define the distance to this set as
\begin{equation*}
    d(u, \mathcal{M}) := 
    % \frac{\inf_{v \in \mathcal{M}}\|\nabla(u - v)\|_{L^{2}(M)}}{\|u\|_{L^{\frac{2(n-1)}{n-2}}(\partial M)}} \gtrsim
    \frac{\inf_{v \in \mathcal{M}}\|u - v\|_{H^{1}(M)}}{\|u\|_{H^{1}(M)}}.
\end{equation*}
Note that this distance is invariant under scaling of $u$, and bounded from above. 

\begin{theorem}\label{thm:mtheor}
    Let $(M,g)$ be a compact Riemannian manifold with boundary of dimension $n\geq 3$, such that $Q(M,\partial M) < Q(B^{n},\partial B^{n})$. Assume also that the reference metric $g$ has $R_{g}\geq 0$. Then there exist constants $C >0$ and $\gamma \geq 0$, depending on $(M,g)$, such that:
\begin{equation*}
    Q(u) - Q(M,\partial M) \geq C d(u,\mathcal{M})^{2+\gamma} \quad \text{ for all } u \in H^{1}(M,\mathbb{R}_{+}).
\end{equation*}
\end{theorem}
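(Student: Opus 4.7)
The plan is to reduce the stability estimate on $M$ to the analogous estimate for an effective functional on the closed manifold $\partial M$, to which one can then adapt the strategy of Theorem A. For any $v\in H^{1/2}(\partial M)$, let $\mathcal{P}v\in H^{1}(M)$ denote the unique minimizer of
\[\cF(w):=\int_{M}\left(c_{n}|\nabla w|^{2}+R_{g}w^{2}\right)d\text{vol}_{g}\]
among $w$ with trace $v$. The hypothesis $R_{g}\geq 0$ together with the trace inequality makes $\cF$ coercive, so $\mathcal{P}$ is well defined; the maximum principle yields $\mathcal{P}v\geq 0$ when $v\geq 0$. Since the boundary contribution in the numerator of $Q$ and the whole denominator depend only on $v=u|_{\partial M}$, one has $Q(u)\geq Q_{\partial}(u|_{\partial M})$, with equality precisely when $u=\mathcal{P}(u|_{\partial M})$, where
\[Q_{\partial}(v)=\frac{c_{n}\int_{\partial M}v\,\Lambda v\,d\sigma_{g}+2(n-1)\int_{\partial M}h_{g}v^{2}\,d\sigma_{g}}{\left(\int_{\partial M}|v|^{2(n-1)/(n-2)}d\sigma_{g}\right)^{(n-2)/(n-1)}}\]
and $\Lambda$ is the Dirichlet-to-Neumann map associated with $-c_{n}\Delta+R_{g}$. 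Minimizers of $Q$ are then in bijection with minimizers of $Q_{\partial}$ via trace and harmonic extension, and the two infima coincide.

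Decomposing the deficit as
\[Q(u)-Q(M,\partial M)=\bigl[Q(u)-Q_{\partial}(v)\bigr]+\bigl[Q_{\partial}(v)-Q(M,\partial M)\bigr]\]
splits the proof into two steps. The first bracket equals $\cF(u-\mathcal{P}v)$ divided by the critical trace norm of $u$, and by coercivity of $\cF$ and the Sobolev trace embedding it dominates $\|u-\mathcal{P}v\|_{H^{1}(M)}^{2}/\|u\|_{H^{1}(M)}^{2}$. For the second bracket, one needs a quantitative stability bound
\[Q_{\partial}(v)-\min Q_{\partial}\geq C\, d_{\partial}(v,\cM_{\partial})^{2+\gamma},\]
where $\cM_{\partial}:=\{w|_{\partial M}:w\in\cM\}\subset H^{1/2}(\partial M)$ and $d_{\partial}$ is a suitably normalized $H^{1/2}$ distance. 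Combining these two estimates via the triangle inequality and the continuity of $\mathcal{P}:H^{1/2}(\partial M)\to H^{1}(M)$ then yields the desired bound on $d(u,\cM)$.

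The heart of the proof is the stability statement for $Q_{\partial}$, which is a fractional Yamabe-type problem on the closed $(n-1)$-manifold $\partial M$: the operator $c_{n}\Lambda+2(n-1)h_{g}$ is pseudo-differential of order $1$, and $2(n-1)/(n-2)$ is the critical exponent for the embedding $H^{1/2}(\partial M)\hookrightarrow L^{2(n-1)/(n-2)}(\partial M)$. I would adapt the Engelstein--Neumayer--Spolaor scheme: by elliptic regularity (bootstrapped from the boundary equation in \eqref{YPB}) and the hypothesis $Q(M,\partial M)<Q(B^{n},\partial B^{n})$ that prevents boundary concentration, the set $\cM_{\partial}$ should be a compact real-analytic finite-dimensional submanifold of $H^{1/2}(\partial M)$. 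A Lyapunov--Schmidt reduction around $\cM_{\partial}$ then expresses the boundary deficit as a nondegenerate positive quadratic form in the normal directions (controlled by the spectral gap of the linearized operator modulo its kernel) plus an analytic function on the finite-dimensional manifold $\cM_{\partial}$, to which the finite-dimensional \L{}ojasiewicz inequality applies with exponent $2+\gamma$, and $\gamma=0$ in the generic case.

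The main obstacle will be implementing this reduction for the \emph{nonlocal} operator $\Lambda$: establishing analyticity of $v\mapsto Q_{\partial}(v)$ in a neighborhood of $\cM_{\partial}$, the Fredholm and spectral-gap properties of the linearized operator, and a quantitative local quadratic expansion in the $H^{1/2}$-norm are all noticeably more delicate in the pseudo-differential setting than in the closed interior Yamabe problem treated in Theorem A. Once these ingredients are in place, however, the finite-dimensional \L{}ojasiewicz part of the argument should transfer with essentially the same conclusion.
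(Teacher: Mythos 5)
Your proposal is correct and follows essentially the same route as the paper: reduce to the effective boundary functional $\tilde Q$ via the harmonic extension (your $\cP$ is the paper's $E$), split the deficit as $[Q(u)-\tilde Q(v)] + [\tilde Q(v) - Q(M,\partial M)]$ with $v = u|_{\partial M}$, bound the first bracket by coercivity of the extension energy in $H^1_0(M)$, and attack the second via a Lyapunov--Schmidt reduction in $H^{1/2}(\partial M)$ coupled with the finite-dimensional \L{}ojasiewicz inequality, concluding globally by compactness of the set of normalized boundary minimizers. One small imprecision: the analytic function to which \L{}ojasiewicz is applied lives on a neighborhood of the origin in the (finite-dimensional) kernel of the linearized operator at a fixed minimizer, not on $\cM_\partial$ itself, which is not necessarily a finite-dimensional analytic submanifold in the non-integrable case.
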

The stability estimate in Theorem \ref{thm:mtheor} provides a quantitative measure of how far a function is from the set of minimizers in a general Riemannian setting, for a manifold with boundary. 
%%%%%%%%%%%%%%%%%%%%%%%%%%Trace-inequality %%%%%%%%%%%%%%%%%%%%%%%%%%%%%

In the Euclidean case, the first result establishing a sharp trace Sobolev inequality is due to Escobar \cite{EscobarSharpConstantTrace}, who proved that for a smooth real valued function $u$ defined in the upper half $n$-dimensional euclidean space $\mathbb{R}_{+}^{n}$, then
\begin{equation*}
    \left( \int_{\partial \mathbb{R}_{+}^{n}} |u|^{2\frac{n-1}{n-2}}   \right)^{\frac{n-2}{n-1}} \leq K^{-1} \int_{\mathbb{R}_{+}^{n}} |\nabla u|^{2} 
\end{equation*}
where $K = \left(\frac{n-2}{2}\right) (\sigma_{n-1})^{1/(n-1)}$, and $\sigma_{n-1}$ is the volume of the $(n-1)$-dimensional unit sphere. Moreover, the extremal functions of the trace Sobolev inequality were fully characterized. These functions play the role of the Talenti functions in the case without boundary. 

A natural question that arises is how to extend the Bianchi-Egnell inequality \cite{BianchiEgnell} to the trace setting. This was addressed in the work of Ho in \cite{Ho}, who proved that there exists a positive constant $\alpha$, depending only on the dimension, such that:
\begin{equation} \label{eq:Ho-result-flat-space}
    \|\nabla u \|_{L^{2}(\mathbb{R}_{+}^{n})}^{2} - S^{2}\|u\|_{L^{\frac{2(n-1)}{(n-2)}}(\partial \mathbb{R}_{+}^{n})}^{2} \geq \alpha d_{\dot H^1}(u,\mathcal{M}_{\R^n_+})^2 \quad \forall u \in H_{0}^{1}(\mathbb{R}_{+}^{n})
\end{equation}
where
\begin{align*}
    d_{\dot H^1}(u,\mathcal{M}) := \inf_{v \in \mathcal{M}_{\mathbb{R}_{+}^{n}}} \|\nabla(u - v) \|_{L^{2}(\mathbb{R}_{+}^{n})}.
\end{align*}

As expected, using the conformal equivalence between the Euclidean ball and the upper half-space $\R^{n}_{+}$, along with the trace-type Bianchi-Egnell inequality obtained by Ho in \cite{Ho}, it is possible to estimate the gap of the difference between the Yamabe quotient and the Yamabe invariant of the unit ball. We describe this in detail in Section \ref{sectionball}.
%Inspired by this estimate, we then investigated whether a similar gap would persist for other types of manifolds with boundary.

\medskip
For this general case, the overall idea of our proof is to reduce the argument to the analysis of a boundary functional.
While working on functions defined in the fractional Sobolev space of the boundary, we apply a Lyapunov-Schmidt reduction to the restriction of the boundary Yamabe functional to the kernel of the second variation. This is combined with the finite-dimensional version of the Łojasiewicz inequality to obtain local quantitative estimates on the boundary. Using a suitable harmonic extension operator, we extend the result to the whole manifold. 

This step is where the nonnegativity of the scalar curvature $R_{g}$ is required to guarantee the existence of such an extension and the equivalence between some norms.

\begin{remark}
Similarly to the closed case, we expect the quadratic stability in Theorem \ref{thm:mtheor} to hold generically, i.e., that for an open dense subset in the \(C^{2}\) topology on the space of conformal classes of \(C^{\infty}\) metrics on \(M\), it is possible to take \(\gamma = 0\). However, the tools needed to follow either the strategy of \cite{andrade2024quantitative} or that of \cite{anderson2005uniqueness} are still lacking in the boundary trace setting.
\end{remark}

We are also able to obtain as a corollary of Theorem \eqref{thm:mtheor}, a conformally invariant stability estimate.  Define the following distance:
\begin{align*}
    d_{[g]}(g_{u},g_{v}) := \left(\int_{M} |u-v|^{2^{*}} d\text{vol}_{g}\right)^{1/2{*}}
\end{align*}
where \( g_u := u^{\frac{4}{n-2}} g \) and $2^{*} = 2n/n-2$ is the critical Sobolev exponent. Even tough $d_{[g]}$ is defined with respect to a fixed representative metric \( g \in [g] \) in the conformal class, it is independent of this choice. Similarly, when \( Q := Q(M, \partial M) \geq 0 \), we define
the conformally covariant operators
\begin{align}
    L_g &:= - c_n \Delta +R_g \,: \, H^1( M) \mapsto H^{-1}(M), \quad   \text{ and }\\
      B_g &:= - c_n \partial_{\nu_g} + 2(n-1) h_g \,: \, H^{1/2}(\partial M) \mapsto H^{-1/2}(\partial M)
\end{align}
with this, define
\begin{align} \label{eq:def_d_1}
   d_{[g]}^1(g_{u},g_{v}) := \left(\int_{M} (u-v) L_g (u-v)d\text{vol}_{g} + \int_{\partial M} (u-v)B_g (u-v) d\sigma_g\right)^{1/2}
\end{align}
for any $g \in \mathcal{M}(M,g)$ with $\operatorname{vol}_{g}(M)=1$. As before, this distance is independent of the choice of $g$.

The functional $Q$ defined in \eqref{EYPB} measures the scalar curvature plus total mean curvature normalized by the boundary area, so we can define the equivalent functional defined over the class of conformal metrics $\tilde{g} \in [g]$, given by
\begin{align*}
  \mathcal{E}(\tilde{g}) = \frac{\int_{M}R_{\tilde{g}}d\text{vol}_{\tilde{g}} + \int_{\partial M}2(n-1) h_{\tilde{g}}d\sigma_{\tilde{g}}}{\left(\int_{\partial M} d\sigma_{\tilde{g}}\right)^{\frac{n-2}{n-1}}}.
\end{align*}
See for instance \cite{araujo2003critical} for more details.
With this in place, we can state the following corollary:
\begin{corollary}\label{corollary}
    Let $(M,g)$ be a compact Riemannian manifold with boundary of dimension $n \geq 3$. There exists constants $c>0$ and $\gamma \geq 0$, depending on $M$ and $[g]$, such that:
    \begin{align*}
        \mathcal{E}_{g} - Q(M,\partial M) \geq c \left(\frac{\inf \{ d_{[g]}(g,\tilde{g}) : \tilde{g}\in \mathcal{M} \}}{\operatorname{vol}_{g}(\partial M)^{\frac{n-2}{n-1}}}\right)^{2+\gamma} 
    \end{align*}
When $Q = Q(M,\partial M) \geq 0$ and $\mathcal{E}_{g} - Q(M,\partial M) \leq 1$, there exist constants $c>0$ and $\gamma \geq 0$ depending on $M$ and $[g]$ such that:
\begin{align*}
    \mathcal{E}_{g} - Q(M,\partial M) \geq c \left(\frac{\inf \{d_{[g]}^1(g,\tilde{g}) : \tilde{g}\in \mathcal{M} \}}{\operatorname{vol}_{g}(\partial M)^{\frac{n-2}{n-1}}}\right)^{2+\gamma} 
\end{align*}

\end{corollary}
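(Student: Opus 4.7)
My plan is to derive both inequalities from Theorem~\ref{thm:mtheor} by identifying metrics in the conformal class with their conformal factors and then comparing the resulting norms. Fix a reference metric $g_0\in[g]$ with $R_{g_0}\geq 0$, and write any $\tilde g = u^{4/(n-2)} g_0$ for a unique $u>0$. The conformal transformation laws for $R_g$, $h_g$, $d\text{vol}_g$, and $d\sigma_g$ then give $\mathcal{E}(\tilde g)=Q(u)$ with $Q$ the Escobar functional from \eqref{EYPB}; moreover, writing $g_u=u^{4/(n-2)}g_0$ and $g_v=v^{4/(n-2)}g_0$, a direct computation yields $d_{[g]}(g_u,g_v)=\|u-v\|_{L^{2^*}(M,g_0)}$ and confirms its invariance under a change of representative. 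I parameterize $\mathcal{M}$ analogously by a set of admissible conformal factors.

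Since both sides of the first inequality are invariant under $u\mapsto \lambda u$ (provided $\mathcal{M}$ is closed under scaling), I may normalize $\operatorname{vol}_g(\partial M)=1$, equivalently $\|u\|_{L^{2(n-1)/(n-2)}(\partial M,g_0)}=1$, and do likewise for any minimizer $v\in\mathcal{M}$. Theorem~\ref{thm:mtheor} then gives
\begin{equation*}
   \mathcal{E}_g-Q(M,\partial M)=Q(u)-Q(M,\partial M)\geq C\,\bigl(d(u,\mathcal{M})\bigr)^{2+\gamma}.
\end{equation*}
I would then use the Sobolev embedding $H^1(M)\hookrightarrow L^{2^*}(M)$ to deduce $\|u-v\|_{L^{2^*}}\leq C_S\|u-v\|_{H^1}$, and combine this with a uniform upper bound $\|u\|_{H^1(M)}\leq C_0$. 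In the regime of bounded deficit, this bound follows from standard coercivity, namely $R_{g_0}\geq 0$ together with the boundary normalization, via a Poincar\'e--trace estimate on the component meeting $\partial M$. In the complementary regime of large deficit, the inequality is trivially satisfied after enlarging $c$, since the right-hand side remains bounded by quantities depending only on $\mathcal{M}$ and on Sobolev constants.

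For the second estimate, under the additional hypotheses $Q\geq 0$ and $\mathcal{E}_g-Q\leq 1$, the bilinear form on $H^1(M)$ induced by $L_g$ and $B_g$,
\begin{equation*}
   w\mapsto \int_M (c_n|\nabla w|^2 + R_g w^2)\,d\text{vol}_g + \int_{\partial M} 2(n-1)\, h_g w^2\,d\sigma_g,
\end{equation*}
is positive and, by the boundedness of $R_g$ and $h_g$, equivalent to $\|w\|_{H^1(M)}^2$ up to multiplicative constants depending on the representative $g$. Hence $d^1_{[g]}(g,\tilde g) \asymp \|u-v\|_{H^1(M)}$, and the argument reduces to the one above. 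The main obstacle is securing the uniform $H^1$-bound on $u$ in the chosen normalization: different representatives in the conformal class live at very different scales, so the Poincar\'e--trace coercivity must be applied with care, and the large-deficit regime calls for a separate, but elementary, boundedness argument.
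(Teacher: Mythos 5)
Your proposal follows essentially the same strategy as the paper's own proof: identify $\mathcal{E}(\tilde g)=Q(u)$ via the conformal parametrization, normalize $\|u\|_{L^{2(n-1)/(n-2)}(\partial M)}=1$ by scale invariance, apply Theorem~\ref{thm:mtheor} to get the $d(u,\mathcal{M})^{2+\gamma}$ lower bound, and then compare the $H^1$-normalized distance with the $d_{[g]}$ and $d^1_{[g]}$ quantities using Sobolev/trace inequalities, splitting into the regimes near and far from the minimizing set. The paper's proof is the same argument in compressed form (it just remarks that "the denominators are comparable" near $\mathcal{M}$ and that the right-hand side is uniformly bounded far from $\mathcal{M}$), so your elaboration is a correct unpacking of the same route rather than a different one.
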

The proof of this corollary is direct from Theorem~\ref{thm:mtheor} and the definition of the quantities involved, see also \cite{engelstein2022quantitativestabilityminimizingyamabe} for details on the conformal invariance of the involved quantities.
\medskip

Analogously to \ref{robin}, we expect the quadratic stability, i.e., $\gamma = 0$, to hold generically.
Indeed, quadratic stability holds whenever the minimizers are non-degenerate (the linearized equation has trivial kernel) or integrable (see Definition~\ref{def:integrability}), which covers the cases of non-uniqueness associated to symmetries of the problem.
On the other hand, we expect that for particular manifolds, $\gamma > 0$ is necessary. This case can be detected by the so called \emph{Adam-Simons positivity condition},inspired by the works of \cite{adams1988rates} and defined in our setting by \cite{hoflow}. The Adams-Simon condition, is related to the first nontrivial term in the analytic expansion of the Lyapunov–Schmidt reduction of the Yamabe functional at the metric, with its order denoted by $p$. We prove the following

\begin{proposition}[$\text{AS}_{p}$ implies $\gamma >0$]\label{prop:ASpint}
    Fix a compact Riemannian manifold with boundary of dimension $n \geq 3$, $R_{g} \geq 0$, and $p \geq 3$. Let $u_{0}$ be a non-integrable critical point of the Yamabe energy and suppose that it satisfies the $\text{AS}_{p}$ condition. Then there exists a sequence $(u_{i})_{i} \subset H^{1}(M)$ with $u_{i}\to u_{0}$ in $H^{1}(M)$, such that:
    \begin{align*}
        \lim_{i \to \infty} \frac{Q(u_{i})-Q(M,\partial M)}{\|u_{i} - u_{0} \|_{H^{1}(M)}^{p-\alpha}} = 0 \quad \forall \alpha >0
    \end{align*}
\end{proposition}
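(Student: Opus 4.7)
The plan is to produce the bad sequence by perturbing $u_0$ along a degenerate direction exposed by a Lyapunov--Schmidt reduction: the $\text{AS}_p$ condition forces the deficit $Q(u_i)-Q(M,\partial M)$ to decay as the $p$-th power of the perturbation, so dividing by the $(p-\alpha)$-th power of the distance leaves a factor that vanishes in the limit.

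I would first set up a Lyapunov--Schmidt reduction around $u_0$, using the same function-space framework that underlies the proof of Theorem~\ref{thm:mtheor}. Writing $u = u_0 + v + w$ with $v$ in the finite-dimensional kernel $K$ of the second variation of $Q$ at $u_0$ (after quotienting out the scaling direction) and $w\in K^{\perp}$, the implicit function theorem applied to the projection of $Q'(u_0+v+w)$ onto $K^{\perp}$ produces an analytic map $v\mapsto w(v)$ defined near $0$, with $w(0)=0$, $\nabla w(0)=0$ and $\|w(v)\|\leq C\|v\|^{2}$. The reduced functional $F(v):=Q(u_0+v+w(v))$ is then analytic on a neighborhood of $0$ in $K$, with $F(0)=Q(M,\partial M)$ and $\nabla F(0)=0$. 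Non-integrability of $u_0$ guarantees that $F$ is not constant near $0$, so there is a smallest integer $q\geq 2$ for which the homogeneous Taylor component $G_q$ of $F$ at $0$ is nonzero. The condition $\text{AS}_p$ asserts exactly $q=p$, which yields
\begin{equation*}
F(v) = Q(M,\partial M) + G_p(v) + O(\|v\|^{p+1}),
\end{equation*}
and in particular $F(v) - Q(M,\partial M) \leq C\|v\|^{p}$ for $\|v\|$ small, regardless of sign considerations on $G_p$.

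To conclude, pick any $v^*\in K$ with $\|v^*\|=1$, set $v_i := t_i v^*$ with $t_i\to 0^+$, and define $u_i := u_0 + v_i + w(v_i)$. Orthogonality of $v_i$ and $w(v_i)$ together with $\|w(v_i)\| = O(t_i^2)$ give $\|u_i-u_0\|_{H^1(M)}\geq c\,t_i$ for $i$ large, while the upper bound above yields $Q(u_i)-Q(M,\partial M) = F(v_i)-F(0) \leq C\,t_i^{p}$. Hence for every $\alpha>0$,
\begin{equation*}
\frac{Q(u_i)-Q(M,\partial M)}{\|u_i-u_0\|_{H^1(M)}^{\,p-\alpha}} \leq \frac{C\,t_i^{p}}{c^{p-\alpha}\,t_i^{p-\alpha}} = C'\,t_i^{\alpha} \longrightarrow 0,
\end{equation*}
which is the desired statement.

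The main technical obstacle is implementing this reduction cleanly for the Escobar functional on a manifold with boundary, where the critical nonlinearity lives only on $\partial M$. The strategy of the main theorem is to pass to an effective functional on $H^{1/2}(\partial M)$ via a harmonic extension (using $R_g\geq 0$); carrying out Lyapunov--Schmidt in that setting, establishing analyticity of the reduced $F$, and transporting the bounds $\|w(v)\|\lesssim \|v\|^{2}$ and $\|u_i-u_0\|_{H^1(M)}\gtrsim t_i$ back to $M$ through the extension operator in a bi-Lipschitz manner is the step requiring the most care.
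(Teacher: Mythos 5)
Your proposal is correct and follows essentially the same route as the paper: a Lyapunov--Schmidt reduction to a finite-dimensional analytic function on the kernel, the observation that non-integrability plus order of vanishing $p$ gives $|F(v)-F(0)|\le C\|v\|^p$, and a line of perturbations $t\mapsto t v^*$ that then gives the desired ratio. The paper carries this out on the boundary functional $\tilde Q$ and then transports to $M$ via the extension operator $E$, which is linear and has a bounded left inverse (the trace), so $\|Eu_{t}-Ev_0\|_{H^1(M)}$ is comparable to $\|u_t-v_0\|_{H^{1/2}(\partial M)}$ -- this is exactly the point you flag at the end and it does go through painlessly. Two minor imprecisions worth noting: the $\text{AS}_p$ condition is not equivalent to ``order of vanishing $=p$'' -- it also requires that $q_p$ attain a \emph{positive} maximum on the $L^{p}$-unit sphere of $K$; the paper picks the maximizing direction, while your argument only needs the order-of-vanishing part (any direction works for the Taylor upper bound), which is fine since $\text{AS}_p$ in particular implies that order. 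Also, the paper performs the reduction on the constraint manifold $\cB$ using the normalized map $h(a,b)=\|v+a+b\|_{L^p}^{-1}(v+a+b)$ rather than the unnormalized decomposition $u_0+v+w$; this is a cosmetic difference since the functional $Q$ is $0$-homogeneous.
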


From this proposition, it suffices to show explicit examples of compact manifolds with boundary $(M,g)$, where g is a minimizer of the Yamabe energy satisfying $AS_p$ condition for $p \geq 3$. For the closed case, Carlotto, Chodosh, and Rubinstein in \cite{Ottis} showed that the product metric on $\mathbb{S}^{1}(1/\sqrt{n-2})\times \mathbb{S}^{n-1}(1)$ is a
nonintegrable critical point satisfying the Adams-Simon positivity condition for some $p \ge 4$, providing an example of the $AS_4$-condition for critical points. In \cite{frank2022degenerate} it is then shown that for this case, the critical point corresponds to the minimizer and a stability estimate with $\gamma = 2$ holds.

We expect that analogous phenomena occur for the trace-functional studied here. In \cite{cardenas2022bifurcation} it is shown that non-integrable \emph{critical points} occur on product manifolds as the constant in the product metric varies. However, showing that these critical points correspond to minimizers seems challenging in our context since we are dealing with a boundary nonlinearity.

\medskip
%%%%%%%%%%%%%%% Comments on quantitative stability %%%%%%%%%%%%%%%%%%%%%%%%%
Overall, Sobolev inequalities, their sharp constants, and the properties of minimizers and critical points have been a subject sdof interest for several decades. We refer the reader to the following lecture notes as a survey on the subject \cite{frank2024sharp}.
The result of Bianchi and Egnell \cite{BianchiEgnell} was the first to provide a quantitative characterization of the Sobolev inequalities discussed in Lions' work \cite{lions1985concentration}. In a recent paper, Dolbeault, Esteban, Figalli, Frank, and Loss \cite{dolbeault2022sharp} presented a new proof of Bianchi and Egnell's inequality and obtained a completely sharp result. Sobolev inequalities in general, and also in the quantitative sense were also studied in the Riemannian setting as in the works of \cite{chen2024sharp,li1997sharp,nobili2022rigidity,nobili2025quantitative,nobili2024stability} an\cite{nobili2025quantitative}. In other contexts, the general study of quantitative stability for functional and geometric inequalities has been extensively developed, particularly in relation to the isoperimetric inequality, see for instance the works \cite{ciraolo2017shape, krummel2017isoperimetry}. In fact, the idea of using the Łojasiewicz inequality to study quantitative stability was first introduced by Chodosh, Engelstein, and Spolaor in \cite{chodosh2022riemannian}.

%%%%%%%%%%%%%%%%Organized%%%%%%%%%%%%%%%%%%%%%%%%%%%%%%%%%%%%%%%%%%%
This work is organized as follows: In Section 2, we provide details on how to use conformal deformations to move freely between the half-space and the unit ball. In Section 3, we state some technical lemmas concerning the second variation of the boundary Yamabe functional. In Section 4, we present the local and global quantitative results and, finally, discuss the sharpness of the exponent in our result, which is related to the Adams-Simon positivity condition.

%%%%%%%%%%%%%%%%%%%%% Acknowledgments %%%%%%%%%%%%%%%%%%%%%%%%
\subsection{Acknowledgments}
We thank Jesse Ratzkin for helpful discussions about this manuscript. The second author was partially supported by Fondecyt grant \# 11230872. The first and third authors received support from Fondecyt grant \# 11220194. The second and third authors were also supported by Centro de Modelamiento Matemático (CMM) BASAL fund FB210005 for center of excellence from ANID-Chile.

\section{The case of the ball}\label{sectionball}
As we mentioned before, the model case for the scalar flat problem is the Euclidean ball. Here, we establish the best-case scenario of our main result: when $\gamma = 0$. The stability comes from some algebraic manipulations of Ho's result \cite{Ho}. The inverse of the conformal diffeomorphism between the upper Euclidean half-space and the Euclidean ball is given 
\begin{equation*}
    F^{-1}(x_{1},...,x_{n-1},t) = \left(\frac{4x}{(2+t)^{2} + |x|^{2}}, \frac{4-t^{2} - |x|^{2}}{(2+t)^{2} + |x|^{2}} \right)
\end{equation*}
where $x = (x_{1},...,x_{n-1})$. Therefore, if $ds^{2}$ is the euclidean metric in $\mathbb{R}_{+}^{n}$ and $dy^{2}$ is the euclidean metric in $B$ we have:
\begin{equation*}
    4((1+t)^{2} + |x|^{2})^{-2} ds^{2} = (F^{-1})^{*}(dy^{2})
\end{equation*}
which we can write as $4v^{4/(n-2)}ds^{2}$, where:
\begin{equation*}
    v(x,t) = ((1+t)^{2} + |x|^{2})^{(2-n)/2}
\end{equation*}
With that, for $\varphi \in C^{\infty}(\bar{B})$, we define $\bar{\varphi} \in C^{\infty}(\mathbb{R}_{+}^{n})$ as the weighted pushforward function in $\mathbb{R}_{+}^{n}$. Explicitly:
\begin{equation*}
    \bar{\varphi} := v (F^{-1})^{*}(\varphi) = v \cdot \varphi \circ F^{-1}
\end{equation*}
As  in \cite{Ho}, the following identities hold
\begin{align} \label{eq:conf_sphere_grad}
    \int_{\mathbb{R}_{+}^{n}} |\nabla \bar{\varphi}|^{2}dxdt &= \int_{B} |\nabla \varphi|^{2}dy + (n-2)\int_{\partial B} |\nabla \varphi|^{2}dS(y)\\
    \label{eq:conf_sphere_bdry}
    \int_{\partial \mathbb{R}_{+}^{n}} |\bar{\varphi}|^{2\frac{n-1}{n-2}}dx &= \int_{\partial B} | \varphi|^{2\frac{n-1}{n-2}}dS(y).
\end{align}

The set of optimizers for the halfspace $\cM_{\R^d_+}$ is explicitly known from to be
\[
\cM_{\R^d_+} = \{ c D_\lambda T_{x_0} v | c, \lambda > 0, x_0 \in \partial \R^d_+\}
\]
where $T_{x_0}$ is the translation by $x_0$, and $D_\lambda$ denotes dilations of $(x,t)$. 

From \cite{EscobarSharpConstantTrace}, we know that
\begin{align*}
    Q(B^{n},\partial B^{n}) = Q(\mathbb{R}_{+}^{n}, \partial \mathbb{R}_{+}^{n}),
\end{align*}
Thus, we obtain:
\begin{proposition}
    Let $\varphi \in H_{0}^{1}(B)$, then there exists $C>0 $ such that
\begin{align*}
        Q_{B}(\varphi) - Q(B^n, \partial B^n) \geq C  d_{B}(\bar\varphi, \mathcal{M}_{B^n})^{2}
\end{align*}
where
\begin{align*}
        d_{B^n}(\varphi, \mathcal{M}_{B}) := \frac{\inf_{u \in \mathcal{M}_{B}}\|\varphi - u\|_{H^{1}(B)}}{\|\varphi\|_{H^{1}(B)}}
\end{align*}
\end{proposition}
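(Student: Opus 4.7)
The plan is to transfer the problem from the ball to the upper half-space via the conformal diffeomorphism $F^{-1}$, apply the trace Bianchi--Egnell-type inequality \eqref{eq:Ho-result-flat-space} of Ho on $\R^{n}_{+}$, and then pull the resulting distance estimate back to the ball using \eqref{eq:conf_sphere_grad}--\eqref{eq:conf_sphere_bdry}.

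First, I would use that on the Euclidean unit ball we have $R_{g}=0$ and the boundary mean curvature is constant, so the Escobar quotient $Q_{B}(\varphi)$ reduces to a combination of $\int_{B}|\nabla\varphi|^{2}$, $\int_{\partial B}\varphi^{2}$, and the boundary $L^{2(n-1)/(n-2)}$ norm. Combining the conformal identities \eqref{eq:conf_sphere_grad}--\eqref{eq:conf_sphere_bdry} and matching the universal constants $c_{n}$ and $2(n-1)$ so that the interior gradient term plus boundary mean curvature term are jointly proportional to $\|\nabla\bar{\varphi}\|_{L^{2}(\R^{n}_{+})}^{2}$, one rewrites
\[
    Q_{B}(\varphi) \;=\; \kappa_{n}\,\frac{\|\nabla\bar{\varphi}\|_{L^{2}(\R^{n}_{+})}^{2}}{\|\bar{\varphi}\|_{L^{2(n-1)/(n-2)}(\partial\R^{n}_{+})}^{2}},
\]
for an explicit dimensional constant $\kappa_{n}>0$. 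Since $Q(B^{n},\partial B^{n})=Q(\R^{n}_{+},\partial\R^{n}_{+})=\kappa_{n}S^{2}$, the deficit $Q_{B}(\varphi)-Q(B^{n},\partial B^{n})$ becomes a positive multiple of the half-space deficit $\|\nabla\bar{\varphi}\|_{L^{2}(\R^{n}_{+})}^{2}-S^{2}\|\bar{\varphi}\|_{L^{2(n-1)/(n-2)}(\partial\R^{n}_{+})}^{2}$ after normalizing the boundary $L^{p}$-norm to one. Ho's inequality \eqref{eq:Ho-result-flat-space} then yields
\[
    Q_{B}(\varphi) - Q(B^{n},\partial B^{n}) \;\geq\; \alpha'\,d_{\dot H^{1}}(\bar{\varphi},\mathcal{M}_{\R^{n}_{+}})^{2}
\]
for some $\alpha'>0$ depending only on $n$.

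Next, I would transfer the distance back to the ball. The map $\varphi\mapsto\bar{\varphi}=v\,\varphi\circ F^{-1}$ is linear and sends $\mathcal{M}_{B^{n}}$ bijectively onto $\mathcal{M}_{\R^{n}_{+}}$, because $F^{-1}$ is conformal and the two Escobar functionals agree up to the constant $\kappa_{n}$. Thus, if $\bar{u}\in\mathcal{M}_{\R^{n}_{+}}$ is closest to $\bar{\varphi}$ in $\dot H^{1}$, its pullback $u\in\mathcal{M}_{B^{n}}$ satisfies $\overline{\varphi-u}=\bar{\varphi}-\bar{u}$, and \eqref{eq:conf_sphere_grad} gives
\[
    \|\nabla(\bar{\varphi}-\bar{u})\|_{L^{2}(\R^{n}_{+})}^{2} \;=\; \|\nabla(\varphi-u)\|_{L^{2}(B)}^{2} + (n-2)\|\varphi-u\|_{L^{2}(\partial B)}^{2}.
\]
The right-hand side is equivalent to $\|\varphi-u\|_{H^{1}(B)}^{2}$ by the continuity of the trace $H^{1}(B)\hookrightarrow L^{2}(\partial B)$ together with a Neumann-type Poincaré inequality $\|w\|_{L^{2}(B)}\leq C(\|\nabla w\|_{L^{2}(B)}+\|w\|_{L^{2}(\partial B)})$. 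Finally, since $Q_{B}(\varphi)$ and $d_{B}(\varphi,\mathcal{M}_{B^{n}})$ are both homogeneous of degree zero in $\varphi$, I normalize $\|\varphi\|_{H^{1}(B)}=1$, so that $d_{B}(\varphi,\mathcal{M}_{B^{n}})\leq\|\varphi-u\|_{H^{1}(B)}$, and the three displays combine to yield the claim.

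The main technical point I expect is the bookkeeping of constants in the first step: one needs the coefficients of $\int_{B}|\nabla\varphi|^{2}$ and of the boundary mean curvature term $2(n-1)\int_{\partial B}\varphi^{2}$ to be in precisely the ratio dictated by \eqref{eq:conf_sphere_grad}, so that the deficits on both sides of the conformal map coincide up to the single constant $\kappa_{n}$. Beyond this, every step is an algebraic consequence of Ho's inequality and the conformal identities already recorded, so no further obstacle is anticipated.
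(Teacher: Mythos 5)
Your proposal is correct and follows essentially the same route as the paper: conformal transfer to the half-space, apply Ho's trace Bianchi--Egnell inequality \eqref{eq:Ho-result-flat-space}, pull the squared distance back to the ball via \eqref{eq:conf_sphere_grad}, and then upgrade the resulting Robin-type quadratic form $\norm{\nabla(\varphi-u)}_{L^2(B)}^2+(n-2)\norm{\varphi-u}_{L^2(\partial B)}^2$ to the full $H^1(B)$ norm. The only cosmetic difference is that you phrase this last coercivity step as a Neumann-type Poincar\'e inequality plus trace continuity, while the paper invokes the positivity of the first eigenvalue of the Robin Laplacian with parameter $2(n-2)$ on the ball; these are the same estimate.
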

\begin{proof}
\eqref{eq:Ho-result-flat-space}, 
   For the left-hand-side, we use the conformal invariance to reduce it to the half-space problem by \eqref{eq:conf_sphere_grad} and \eqref{eq:conf_sphere_bdry}.
   Then, using \eqref{eq:Ho-result-flat-space} from \cite{Ho}, and the conformal invariance of $d_{[B_n]}^1$ defined in \eqref{eq:def_d_1}
\begin{align*}
        Q_{B^n}(\varphi) - Q(B^n,\partial B^n) 
        &\geq C \inf_{c, x_0,\lambda} \norm{\bar \varphi}_{L^p}^{-2} \inf_{\bar \psi \in \cM_{R^n_+}}^{-2} d_{\R^n_+}^1(\bar \varphi, \bar \psi) \\
        &= C \norm{\varphi}_{L^p(\partial B^n)}^{-2} \inf_{\psi \in \cM_{B^n}} \left( \norm{\nabla (\phi -\psi)}_{L^2(B^n)}^2 + h_{\partial B^n}\norm{\phi -\psi}_{L^2(\partial B^n)}^2 \right)
\end{align*}
   If $k_n>0$ is the first eigenvalue of the Robin Laplacian with parameter $\alpha =2(n-2)$ on the ball, we find that 
       \begin{align*}
        1/2 \int_{B^n}|\nabla \eta|^{2} dy + (n-2)\int_{\partial B^n}|\eta|^{2} dS(y) \geq k_n/2\|\eta \|_{L^{2}(B^n)}^{2}.
    \end{align*}
    On the other hand, 
    \begin{equation*}
        \norm{\varphi}_{L^p(\partial B^n)}^{-2} \le C \norm{\varphi}_{H^1(B^n)}^2,
    \end{equation*}
    so the right-hand side controls $d_B(\phi, \cM_{B^n})$.
   \end{proof}

\section{General manifolds: notation and preliminary constructions.}

In this part we will fix some $g \in [g_{0}]$ and it will be implicit in the definition of the Sobolev spaces involved. We recall the Yamabe energy for the scalar flat case:
\begin{equation*}
    Q(u) = \frac{\int_{M}(c_{n} |\nabla u|^{2} + R_{g} u^{2})d\text{vol}_{g} + \hat{c}_{n}\int_{\partial M} h_{g}u^{2}d\sigma_{g}}{\left(\int_{\partial M}|u|^{2\frac{(n-1)}{(n-2)}}d\sigma_{g}\right)^{(n-2)/(n-1)}}
\end{equation*}
where $\hat{c}_{n}= 2(n-1)$. In order to gain compactness of the set of minimizers, as usual in the literature, we work in the Banach manifold
\begin{align}
    \mathcal{B} &:= \left\{u\in H^{1/2}(\partial M,\mathbb{R}_{+}): \int_{\partial M} u^{\frac{2(n-1)}{n-2}} d\sigma_{g} = 1 \right\}.
    %\\
    % \mathcal{B}_{1} &:= \left\{u\in H^{1}(M,\mathbb{R}_{+}): \int_{\partial M} u^{\frac{2(n-1)}{n-2}} d\sigma_{g} = 1 \right\}
\end{align}

\subsection{The boundary functional \texorpdfstring{$\tilde Q$}{Q~}.}
Because $\cB$ only takes into account the boundary trace of the functions, we will work with a related functional for nonnegative functions on the boundary only. This functional is a fractional analogue of the Yamabe quotient for manifolds without boundary. 
In order to do so, we define a harmonic extension operator $E: H^{1/2}(\partial M) \mapsto H^{1}(M)$ that maps a function $u$ on $\partial M$ to $Eu$, the solution of
\begin{align} \label{eq:def_harmonic_extension}
       \begin{cases}
        L_g (Eu) = 0 & \text{ in } M\\
        Eu= u & \text{ on }\partial M
    \end{cases}
   .
\end{align}

Since $M$ is compact, the trace is a linear continuous operator $\tr(.)$ so that $H^{1/2}(\partial M)\subseteq L^{2}(\partial M)$ is the image of $H^{1}(M)$ by $\tr(.)$. Note that $E$ is well-defined because we assume $R_g \ge 0$, such that $-c_n \Delta  + R_g$ is a positive elliptic operator. Furthermore, if $u>0$ on $\partial M$, the maximum principle shows that $E(u)>0$ in the interior.

Next, we define the boundary functional
$\tilde{Q}:H^{1/2}(\partial M) \mapsto \mathbb{R} $ as
\begin{align*}
    \tilde{Q}(u) := Q(Eu) = \frac{\int_{M}(c_{n} |\nabla E u|^{2} + R_{g} (Eu)^{2})d\text{vol}_{g} + \hat{c}_{n}\int_{\partial M} h_{g}u^{2}d\sigma_{g}}{\left(\int_{\partial M}|u|^{2\frac{(n-1)}{(n-2)}}d\sigma_{g}\right)^{(n-2)/(n-1)}}.
\end{align*}
Integrating by parts we obtain
\begin{align*}
    % \tilde{Q}(v) &= \frac{c_{n}\int_{M} \Delta_{g}(Ev) Ev + \int_{M} R_{g}(Ev)^{2} + \int_{\partial M} Ev (\partial_{\nu}Ev) + \hat{c}_{n} \int_{\partial M} h_{g} v^{2}}{\left(\int_{\partial M} |v|^{\frac{2n-2}{n-2}}\right)^{(n-2)/(n-1)}}\\
    % &= \frac{ \hat{c}_{n} \int_{\partial M} h_{g} v^{2} + \int_{\partial M} Ev (\partial_{\nu}Ev)}{\left(\int_{\partial M} |v|^{\frac{2n-2}{n-2}}\right)^{(n-2)/(n-1)}}\\
    % &= \frac{ \hat{c}_{n} \int_{\partial M} h_{g} v^{2} + \langle \Lambda v,v \rangle}{\left(\int_{\partial M} |v|^{\frac{2n-2}{n-2}}\right)^{(n-2)/(n-1)}}\\
    \tilde{Q}(u) = \frac{   \langle (\Lambda + \hat c_n h_g) u,u\rangle 
    }{\left(\int_{\partial M} |u|^{2\frac{(n-1)}{(n-2)}}d\sigma_{g}\right)^{(n-2)/(n-1)}},
\end{align*}
where $\Lambda:H^{1/2}(\partial M) \longrightarrow H^{-1/2}(\partial M)$ is the Dirichlet-to-Neumann map associated to the elliptic operator $-c_n \Delta_g + R_g$. This map sends a function $u$ defined on the boundary to $\partial_n E(u)$. It is continuous from $H^s(\partial M)$ to $H^{s-1}(\partial M)$ for all $s \in \R$. Schauder estimates (e.g \cite[Theorem 6.6]{gilbarg2001elliptic}) show that it is also bounded from $C^{2,\alpha}(\partial M)$ to $C^{1,\alpha}(\partial M)$. 
With a slight abuse of notation, we use $\inprod{\cdot}{\cdot}$ both for the $L^2$-inner product with respect to the surface measure on $\partial M$ or the associated coupling between $H^{-1/2}$ and $H^{1/2}$.

With this functional, we can define the conformal invariant in a manner that involves only functions on the boundary.
\begin{lemma} \label{lem:infequa}
    With the previous definitions, we have
    \begin{align*}
        Q(M,\partial M):= \inf \{Q(u):u\in H^{1}(M)\} = \inf \{\tilde{Q}(v):v\in H^{1/2}(\partial M) \} =: \tilde{Q}(M,\partial M).
    \end{align*}
\end{lemma}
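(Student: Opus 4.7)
The plan is to prove the two inequalities separately, exploiting the fact that $E$ is, by construction, the variational minimizer of the quadratic form $w \mapsto \int_M(c_n|\nabla w|^2 + R_g w^2)\,d\text{vol}_g$ subject to a prescribed trace.

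For the inequality $Q(M,\partial M) \leq \tilde Q(M,\partial M)$, observe that $E$ maps $H^{1/2}(\partial M)$ into $H^1(M)$, and by the very definition $\tilde Q(v) = Q(Ev)$. Thus any competitor for $\tilde Q(M,\partial M)$ produces a competitor for $Q(M,\partial M)$ with the same value, and taking the infimum yields the inequality. Positivity, if needed, follows from the maximum principle for the operator $L_g = -c_n \Delta_g + R_g$ (which is positive since $R_g\ge 0$), ensuring that whenever $v \geq 0$ is non-trivial on $\partial M$, the extension $Ev$ is strictly positive in the interior.

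For the reverse inequality, I take an arbitrary $u \in H^1(M)$ with $\tr(u)\not\equiv 0$ (so that $Q(u)$ is finite and defined), set $v = \tr(u) \in H^{1/2}(\partial M)$, and compare $u$ to the harmonic extension $Ev$. Writing $u = Ev + \phi$ with $\phi \in H^1_0(M)$, expanding the quadratic form, and integrating by parts using $L_g(Ev) = 0$ in $M$ together with $\phi|_{\partial M} = 0$, the cross term vanishes and we obtain
\begin{equation*}
    \int_M\!\bigl(c_n|\nabla u|^2 + R_g u^2\bigr)\,d\text{vol}_g = \int_M\!\bigl(c_n|\nabla Ev|^2 + R_g (Ev)^2\bigr)\,d\text{vol}_g + \int_M\!\bigl(c_n|\nabla \phi|^2 + R_g \phi^2\bigr)\,d\text{vol}_g.
\end{equation*}
Since $R_g\ge 0$, the last term is non-negative, so the bulk part of the numerator of $Q(u)$ dominates that of $\tilde Q(v)$. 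Because both the boundary term $\hat c_n \int_{\partial M} h_g u^2 \,d\sigma_g$ and the denominator $\bigl(\int_{\partial M}|u|^{2(n-1)/(n-2)} d\sigma_g\bigr)^{(n-2)/(n-1)}$ depend only on the trace $v$, they coincide with the corresponding quantities in $\tilde Q(v)$. Therefore $Q(u)\ge \tilde Q(v)$, and taking the infimum over $u$ completes the argument.

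The only subtle point, rather than a genuine obstacle, is keeping track of the admissible classes: the infimum defining $Q(M,\partial M)$ is over positive functions, whereas $\tilde Q$ is defined on $H^{1/2}(\partial M)$. This is handled by standard density and absolute-value arguments ($Q(u) = Q(|u|)$ since neither $|\nabla u|^2$, $u^2$, nor $|u|^p$ distinguish sign), combined with the maximum principle noted above, so both infima can be taken over the positive cone without changing their values.
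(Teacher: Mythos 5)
Your proof is correct, but for the inequality $Q(M,\partial M)\geq \tilde Q(M,\partial M)$ it takes a genuinely different route from the paper. The paper invokes the \emph{existence} of a minimizer $u_*$ of $Q$ (available under the standing hypothesis $Q(M,\partial M)<Q(B^n,\partial B^n)$), notes that $u_*$ solves the interior equation $L_g u_*=0$ and hence $u_*=E\,\tr(u_*)$, and immediately concludes $Q(M,\partial M)=Q(u_*)=\tilde Q(\tr u_*)\geq \tilde Q(M,\partial M)$. You instead argue by the Dirichlet principle: write any $u\in H^1(M)$ with nontrivial trace as $u=Ev+\phi$, $v=\tr u$, $\phi\in H^1_0(M)$, kill the cross term by integrating by parts against $L_g(Ev)=0$ and $\phi|_{\partial M}=0$, and observe that since $R_g\geq 0$ the $\phi$-contribution to the numerator is nonnegative while the boundary integral and the denominator depend only on $v$. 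Your version is more elementary and does not use existence of a minimizer at all, so it proves the identity for any compact manifold with boundary with $R_g\geq 0$; the paper's is shorter given the machinery already in place. It is worth noting that the orthogonal decomposition $u=u_0+Ev$ you use is exactly the one the paper itself deploys later, in the proof of Proposition~\ref{prop:local_stab}, so your argument is fully consistent with the paper's toolkit. Your closing remark about passing from the positive $C^1$ cone in \eqref{eq:def_Q} to $H^1$ via $Q(u)=Q(|u|)$ and density is standard and appropriate.
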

\begin{proof}
    First note that $E(H^{1/2}(\partial M))\subset H^1(M)$, so we have:
    \begin{align*}
        \tilde{Q}(M,\partial M) 
        %= \inf \{\tilde{Q}(v):v\in H^{1/2}(\partial M) \} 
        &= \inf \{Q(Ev):v\in H^{1/2}(\partial M) \}
        %\\
        % &= \inf \{Q(\tilde{u}):\tilde{u}\in E(H^{1/2}(\partial M)) \}\\
        % &
        \geq \inf \{Q(u):u\in H^{1}(M)\} = Q(M,\partial M)
    \end{align*}
     For the other inequality, 
     we may use that a minimizer $u_*$ for $Q(M,\partial M)$ exists and solves \eqref{YPB}. In particular, $u_*= E \tr(u_*)$ and
     \[
Q(M,\partial M)= Q(u_*) = \widetilde{Q}(\tr u_*) \ge \tilde Q (M,\partial M). \qedhere
     \]
\end{proof}

In order to study $\tilde Q$ on $\cB$, we define for any $v \in \cB$ the tangent space
   \begin{align*}
        T_{v}\mathcal{B} = \left\{ u \in H^{1/2}(\partial M) : \int_{\partial M} v^{\frac{n}{n-2}}u d\sigma_{g} = 0\right\}
    \end{align*}
  and the associated orthogonal projector  
\begin{align*}\pi_{T_{v}\mathcal{B}} :H^{1/2}(\partial M) \mapsto T_{v}\mathcal{B} \subset H^{1/2}(\partial M) 
\quad \text{ by } \quad
    \pi_{T_{v}\mathcal{B}} \phi = \phi - \inprod{ v^{\frac{n}{n-2}}}{\phi} v .
\end{align*}
We start by computing the second variation of $\tilde Q$ on $\cB$. Throughout the paper, we will write $\nabla_{\cB}\tilde Q(u) = \nabla \tilde Q (u)\circ \pi_{T_{u}\cB}$ and similarly for $\nabla^2 Q_\cB$.
\begin{lemma} \label{lem:derivatives}
   
    For every $u \in \mathcal{B}$ the second variation of $\tilde Q$ on $\mathcal{B}$ is given by:
    \begin{align*}
     \frac{1}{2} \nabla_{\mathcal{B}}^{2} \tilde{Q} (u)[\varphi,\eta] 
     & = \inprod{\pi_{T_{u}\mathcal{B}}\varphi}{(\Lambda + \hat{c}_n h_{g} ) \pi_{T_{u}\mathcal{B}}\eta}
     - 
     \frac{n}{n-2} \tilde{Q}(u)\int_{\partial M}u^{\frac{2}{n-2}} (\pi_{T_{u}\mathcal{B}}\varphi ) (\pi_{T_{u}\mathcal{B}}\eta) d\sigma_{g}
     % \hat{c}_n \int_{\partial M } h_{g} (\pi_{T_{u}\mathcal{B}}\varphi )\, (\pi_{T_{u}\mathcal{B}}\eta ) d\sigma_{g} + \int_{\partial M}(\Lambda \pi_{T_{u}\mathcal{B}} \eta)\pi_{T_{u}\mathcal{B}} \varphi \hspace{0.1cm} d\sigma_{g} - \\
    % &\frac{n}{n-2} \tilde{Q}(u)\int_{\partial M}u^{\frac{2}{n-2}}\pi_{T_{u}\mathcal{B}}\varphi \hspace{0.1cm}\pi_{T_{u}\mathcal{B}}\eta\hspace{0.1cm} d\sigma_{g}\\
    % &\int_{M} c_{n} \nabla \pi_{T_{v}\mathcal{B}}\varphi \cdot \nabla \pi_{T_{v}\mathcal{B}}\eta + R_{g}\pi_{T_{v}\mathcal{B}}\varphi \pi_{T_{v}\mathcal{B}}\eta +\int_{\partial M}\left(\hat{c}_{n} h_{g} \pi_{T_{v}\mathcal{B}}\eta \right.\\
    % &\left. - Q(u)\frac{n}{n-2}u^{\frac{2}{n-2}}\pi_{T_{v}\mathcal{B}}\eta \right)\pi_{T_{v}\mathcal{B}}\varphi
\end{align*}
for all $\eta, \varphi \in H^{1/2}(\partial M)$.

\end{lemma}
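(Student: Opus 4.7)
The plan is to compute the unconstrained second variation of $\tilde Q$ on $H^{1/2}(\partial M)$ at $u\in\cB$ and then specialize to tangent directions. I would write $\tilde Q(u) = N(u)/D(u)$ with quadratic numerator $N(u) = \langle (\Lambda + \hat c_n h_g)u, u\rangle$ and nonlinear denominator $D(u) = I(u)^{(n-2)/(n-1)}$, where $I(u) = \int_{\partial M} u^{p}\, d\sigma_g$ and $p = 2(n-1)/(n-2)$. Two arithmetic identities will organize the whole calculation: $p - 1 = n/(n-2)$ and $p - 2 = 2/(n-2)$, together with $\tfrac{n-2}{n-1}\,p = 2$.

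I would then apply the quotient rule twice, obtaining the full Hessian of $\tilde Q$ as a sum of five terms built out of $N', N'', D', D''$. The derivatives are immediate: $N$ is quadratic, so $N''(u)[\varphi,\eta] = 2\langle(\Lambda + \hat c_n h_g)\varphi, \eta\rangle$ (using self-adjointness of $\Lambda$), while differentiation under the integral gives, at $u\in\cB$ where $I(u)=D(u)=1$ and $N(u)=\tilde Q(u)$,
$$D'(u)[\varphi] = 2\int_{\partial M}u^{n/(n-2)}\varphi\, d\sigma_g,$$
$$D''(u)[\varphi,\eta] = \frac{2n}{n-2}\int_{\partial M}u^{2/(n-2)}\varphi\eta\, d\sigma_g \;-\; \frac{4}{n-2}\Bigl(\int u^{n/(n-2)}\varphi\Bigr)\Bigl(\int u^{n/(n-2)}\eta\Bigr).$$

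The decisive simplification occurs when the assembled Hessian is evaluated on the projected arguments $\pi_{T_u\cB}\varphi$ and $\pi_{T_u\cB}\eta$. By the very definition of $T_u\cB$, any tangent vector $\psi$ satisfies $\int u^{n/(n-2)}\psi\, d\sigma_g = 0$; this annihilates every occurrence of $D'(u)$ in the Hessian, and also kills the rank-one piece of $D''(u)$. Only two contributions survive: $N''/D$ produces the bilinear form in $\Lambda + \hat c_n h_g$, and $-N D''/D^2$ contributes $-\frac{2n}{n-2}\tilde Q(u) \int u^{2/(n-2)}(\pi_{T_u\cB}\varphi)(\pi_{T_u\cB}\eta)\, d\sigma_g$. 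Dividing by $2$ yields the formula in the lemma.

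I do not expect any serious obstacle: the whole argument is a routine quotient-rule exercise. The only step requiring care is bookkeeping the Sobolev exponents $p,\ p-1,\ p-2$ and verifying that the rank-one cross terms truly cancel after projection. Conceptually, this cancellation reflects the scale invariance $\tilde Q(\lambda u) = \tilde Q(u)$, which forces $\nabla \tilde Q(u)$ to be $L^2$-orthogonal to $u^{n/(n-2)}$ and thereby makes the normal-bundle correction to the constrained Hessian trivial, so that the projected ambient Hessian is already the correct object.
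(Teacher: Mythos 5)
Your proposal is correct and takes essentially the same approach as the paper: compute the ambient first and second variations of the Rayleigh quotient $\tilde Q = N/D$, use the normalization $I(u)=1$ at $u\in\cB$, and observe that every term containing a factor $\int_{\partial M}u^{n/(n-2)}\psi\,d\sigma_g$ (i.e.\ $D'$ and the rank-one part of $D''$) vanishes after projecting both arguments onto $T_u\cB$. Your quotient-rule bookkeeping simply makes explicit the cancellation that the paper carries out in one compressed step (``taking one more derivative''), including the scale-invariance observation that explains \emph{why} the normal-bundle correction to the constrained Hessian is absent.
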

\begin{proof}

For $u \in \mathcal{B}$ and $\varphi \in H^{1/2}(\partial M)$, the first variation of $\tilde{Q}$ is given by:
 \begin{align*}
     \nabla \tilde{Q}(u)[\varphi]   =& \|u\|_{L^{2\frac{n-1}{n-2}}}^{-4}\left[\|u\|_{L^{2\frac{n-1}{n-2}}}^{2}\left(2\hat{c}_n \int_{\partial M } h_{g}u\varphi d\sigma_{g} + 2\int_{\partial M}(\Lambda u)\varphi d\sigma_{g} \right) - \right.\\
     &\left. 2\left(\hat{c}_n \int_{\partial M } h_{g}u^{2} d\sigma_{g} + \int_{\partial M}(\Lambda u)u d\sigma_{g}\right)\|u\|_{L^{2\frac{n-1}{n-2}}}^{-\frac{2}{n-2}} \int_{\partial M}u^{\frac{n}{n-2}} \varphi d\sigma_{g} \right]
 \end{align*}
The last term vanishes for $\phi \in T_u\cB$ so 
\begin{align} \label{eq:NablaQ}
    \nabla_{\mathcal{B}} \tilde{Q}(u)[\varphi] =2 \inprod{\varphi}{(\Lambda + \hat{c}_n h_{g} ) u}
\end{align}
Taking one more derivative gives the second variation restricted to the tangent space
\begin{align*}
    \nabla_{\mathcal{B}}^{2} \tilde{Q}(u)[\varphi,\eta] 
    % &= \frac{\hat{c}_n \int_{\partial M } h_{g}\varphi \eta d\sigma_{g}  + \int_{\partial M}(\Lambda \eta)\varphi d\sigma_{g} }{\|u\|_{L^{2\frac{n-1}{n-2}}(\partial M)}^{2}} - 2 \tilde{Q}(u)\frac{\frac{n}{n-2}\int_{\partial M}u^{\frac{2}{n-2}}\varphi \eta d\sigma_{g}}{\|u\|_{L^{2\frac{n-1}{n-2}}(\partial M)}^{2\frac{2n+3}{n-2}}}\\
    &= 2\inprod{\varphi}{(\Lambda + \hat{c}_n h_{g} ) \eta}
     - 
     \frac{2 n}{n-2} \tilde{Q}(u)\int_{\partial M}u^{\frac{2}{n-2}} \varphi \eta d\sigma_{g} \qedhere
\end{align*}
\end{proof}

\subsection{Local analysis near a minimizer.}
We start by defining the space of normalized optimizers for $\tilde Q$, which are boundary traces of optimizers for the original functional. Let
\begin{equation}
\label{eq:def_solution-variety}
\cM_\cB = \{ u \in \cB | Eu  \in \cM \}.
\end{equation}
For later use, we also define the set of normalized critical points
\begin{equation}
\label{eq:def_crit_points}
\cC_\cB = \{ u \in \cB | \nabla \tilde Q(u) = 0  \}.
\end{equation}

Throughout this subsection, we fix $v\in \cM_\cB$. The analysis in this section is largely analogous to the local stability analysis in \cite{engelstein2022quantitativestabilityminimizingyamabe}. However, we provide sufficient details to check the mapping properties related to the less common spaces and non-local operator $\Lambda$ that appears here.

The behavior of the functional $\tilde Q$ in a neighborhood of $v$ in $\cB$ is encoded in its second variation $\nabla^2_\cB \tilde Q(v)$. It is the quadratic form associated with the operator
$$
\cL_v := \Lambda + \hat c_n h_g - \frac{n }{n-2} Q(M,\partial M) v^{\frac{2}{n-2}}.
$$
Since $v$ is a minimum, $\cL_v$ is positive semi-definite. We denote by $K \subset T_v\cB$ its kernel, which is of dimension $l <\infty$ (possibly $l=0$, for instance, when $Q(M,\partial M) \le 0$) and consists of smooth functions.
We denote by $\pi_K$ the orthogonal projection on $K$ and by $K_{1/2}^\perp$ its orthogonal in $T_v\cB$, both with respect to the $L^2$ inner product.

In order to state the following lemma, we parametrize a neighborhood of $v$ in $\cB$ by the tangent space $T_v\cB = K \times K_{1/2}^\perp$ so we define
$$
h : K \times K^\perp_{1/2} \mapsto \cB \quad \text{ as }
 \quad h(a,b) =\norm{v +a + b}^{-1}_{L^p}( v + a + b).
$$

\begin{lemma} (Lyapunov-Schmidt reduction). \label{lem:Lyapunov-Schmidt}
    There exists a sufficiently small neighborhood $W$ of $v$ in $\cB$ and a continuous map $P:u\in 
    W\mapsto P(u) \in W$ with the following properties.
    \begin{enumerate}
        \item \label{item:parametrization}$P(W)$ is a $C^1$ manifold of dimension $l:= \dim (K)$. More concretely, there exists $V\subset K$, a neighborhood of the origin, and a $C^1$ function $F: V \mapsto K^\perp_{1/2}$ such that $P(W) = \{h(a, F(a)) | a \in V\}$, with $F(0)= 0$, $\nabla F(0) = 0$.
        \item \label{item:critical_points}If $w \in W$ is a critical point of $\tilde Q$, then $w = P(w)$. 
        \item \label{item:analiticity} The function $q : V \mapsto \R$ defined by $q(a) = \tilde Q(h(a, F(a)))$ is real analytic. (for any Euclidean norm in the finite dimensional $V$)
        \item \label{item:gradients} For all $a \in V$ and $\phi \in T_v\cB$, we have for some $c_a >0$
        \begin{equation}
            \nabla q(a)[\pi_K \phi]  = c_a\nabla_\cB \tilde Q((h(a,F(a)))[\phi].
        \end{equation}    
        \item \label{item:quadr_stability}
        There exists $C_1(v)>0$ such that for all $u\in W$, 
        \begin{equation} \label{eq:quadratic-stability}
            \tilde Q(u) -\tilde Q (P(u)) \ge C_1(v) \norm{u - P(u)}_{H^{1/2}}^2.
        \end{equation} 
    \end{enumerate}
\end{lemma}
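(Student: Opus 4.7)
The plan is to carry out a standard Lyapunov–Schmidt reduction via the analytic implicit function theorem. The idea is to solve $(I-\pi_K)\nabla \tilde Q(h(a,b))=0$ for $b=F(a)$ with $a\in K$, and then define $P(u):=h(a,F(a))$ whenever $u=h(a,b)$ is the unique chart decomposition. Concretely, set
\[
G(a,b):= (I-\pi_K)\,\nabla \tilde Q(h(a,b)) \in K_{-1/2}^\perp,
\]
where $K_{-1/2}^\perp \subset H^{-1/2}(\partial M)$ denotes the $L^2$-dual of $K_{1/2}^\perp$. Since $v$ is a (minimizing) critical point of $\tilde Q|_\cB$, we have $G(0,0)=0$. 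The partial differential in $b$ at the origin equals $\cL_v|_{K_{1/2}^\perp}:K_{1/2}^\perp\to K_{-1/2}^\perp$, an isomorphism by Fredholm theory for the elliptic operator $\cL_v$, whose kernel is $K$. The functional $\tilde Q$ is real analytic in a neighborhood of $v$ (the Dirichlet-to-Neumann operator $\Lambda$ and multiplication by $h_g$ are bounded linear, while $u\mapsto (\int u^{2(n-1)/(n-2)})^{(n-2)/(n-1)}$ is analytic on the positive cone in $H^{1/2}$), so $G$ is analytic. The analytic IFT then produces an analytic $F:V\subset K\to K_{1/2}^\perp$ with $F(0)=0$, and differentiating $G(a,F(a))=0$ at $a=0$ together with $\cL_v|_K=0$ gives $\nabla F(0)=0$.

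With $P$ defined as above on a small enough $W$, items (1), (2), (3) follow directly: $P(W)=\{h(a,F(a)):a\in V\}$ is the analytic graph of $F$, of dimension $l$; if $w\in W$ is critical then $G(a,b)=0$, so by IFT uniqueness $b=F(a)$ and hence $w=P(w)$; and $q=\tilde Q\circ h(\cdot,F(\cdot))$ is a composition of analytic maps. For item (4), a chain-rule computation yields
\[
\nabla q(a)[\alpha]= c_a\, \nabla_\cB \tilde Q(h(a,F(a)))[\alpha+\nabla F(a)\alpha],\qquad c_a=\norm{v+a+F(a)}_{L^p}^{-1},
\]
with $c_a$ arising from differentiating the normalization in $h$. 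Since $\nabla F(a)\alpha\in K_{1/2}^\perp$ while $\nabla_\cB \tilde Q(h(a,F(a)))$ has zero $K^\perp$-component by the defining property of $F$, only the $\alpha$-contribution survives; choosing $\alpha=\pi_K\phi$ for $\phi\in T_v\cB$ and invoking the same vanishing on $(I-\pi_K)\phi$ gives the stated identity.

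The heart of the proof, and the main obstacle, is item (5). Writing $u=h(a,F(a)+s)$ with $s\in K_{1/2}^\perp$ small, I would Taylor-expand $\tilde Q$ around $u_a:= h(a,F(a))$ to obtain
\[
\tilde Q(u)-\tilde Q(u_a) = \tfrac12 \nabla_\cB^2 \tilde Q(u_a)[s,s] + o(\norm{s}_{H^{1/2}}^2),
\]
where the first-order term vanishes because $(I-\pi_K)\nabla \tilde Q(u_a)=0$ by construction and $s\in K_{1/2}^\perp$. The critical step is the coercivity
\[
\langle \cL_v s, s\rangle \ge c_0\, \norm{s}_{H^{1/2}}^2\qquad \forall s\in K_{1/2}^\perp,
\]
which follows from a Gårding-type inequality for the principal part $\Lambda$ (pseudodifferential of order $1$, giving $\langle \Lambda s,s\rangle \ge c \norm{s}_{H^{1/2}}^2 - C\norm{s}_{L^2}^2$) combined with the spectral gap on $K_{1/2}^\perp$ produced by excising the kernel of $\cL_v$. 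Continuity of $\nabla_\cB^2 \tilde Q$ at $v$ then transfers this coercivity uniformly to $u_a$ for $a$ small, and the smoothness of $h$ yields that $\norm{u-P(u)}_{H^{1/2}}$ is comparable to $\norm{s}_{H^{1/2}}$, so the $o(\norm{s}_{H^{1/2}}^2)$ remainder is absorbed on a smaller $W$. The main difficulty is precisely this coercivity step together with the $H^{1/2}$-control of the Taylor remainder at the critical trace exponent $2(n-1)/(n-2)$, which will require invoking the Sobolev trace embedding $H^{1/2}(\partial M)\hookrightarrow L^{2(n-1)/(n-2)}(\partial M)$ at its critical endpoint.
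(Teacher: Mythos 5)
Your overall strategy matches the paper's: decompose a neighborhood of $v$ by the normalized chart $h(a,b)$ along $K\oplus K^\perp_{1/2}$, solve the $K^\perp$-component of the Euler--Lagrange equation for $b=F(a)$ by the implicit function theorem, and obtain the coercivity estimate (item 5) from a second-order Taylor expansion together with the spectral gap on $K^\perp_{1/2}$. Items (1), (2), (4), and (5) are argued essentially as in the paper (the paper bounds the second-order Taylor remainder explicitly via the formula for $\nabla_\cB^2\tilde Q$ in Lemma~\ref{lem:derivatives} rather than by a soft ``$o(\|s\|^2)$'' argument, but your sketch captures the right idea, and the Gårding-type inequality you invoke for $\Lambda$ is one valid way to get the coercivity of $\cL_v$ on $K^\perp_{1/2}$).

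There is, however, a genuine gap in item (3), the analyticity of $q$. You assert that $u\mapsto\bigl(\int_{\partial M}u^{2(n-1)/(n-2)}\,d\sigma\bigr)^{(n-2)/(n-1)}$, and hence $\tilde Q$, is real analytic on the positive cone in $H^{1/2}(\partial M)$, and you feed this into the analytic implicit function theorem. This is not true. For the non-integer critical exponent $p=2(n-1)/(n-2)$ the Nemytskii map $u\mapsto u^p$ has a pointwise power series expansion $(v+w)^p=\sum_k\binom{p}{k}v^{p-k}w^k$ whose radius of convergence at each point is controlled by $|v(x)|$; hence analyticity of the operator can only hold on a space where smallness in norm forces smallness pointwise and where $v$ is bounded away from zero. $H^{1/2}(\partial M)$ is not embedded in $L^\infty$ (indeed it is not even a Banach algebra), so the series has no uniform domain of convergence, and the composition map fails to be analytic there. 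This is precisely why the paper's proof switches to Hölder spaces for this step: it shows $h$ is analytic from $C^0$, $G$ is analytic from $C^{2,\alpha}\cap(K\times K^\perp_{1/2})$ to $C^{1,\alpha}\cap K^\perp_{-1/2}$, verifies via Schauder estimates for the Neumann problem that $\partial_b G(0,0)$ is invertible between the Hölder spaces, and only then invokes the analytic implicit function theorem to conclude that $F$ and thus $q$ are analytic. Without this passage through Hölder norms (or some other elliptic-regularity bootstrap that places the minimizer and the branch $h(a,F(a))$ in a space of bounded functions), the analyticity claim --- on which the whole Łojasiewicz argument downstream rests --- is unsupported, and your proof of item (3) does not go through as written.
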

If $K$ is trivial, this lemma is not necessary, but one may check that everything is consistent upon defining $P(u) = v$ for all $u\in W$. If $K$ is non-trivial, the lemma selects an $l$-dimensional variety $P(W)$ on which quadratic stability may fail, and an infinite dimensional space where the stability is indeed quadratic due to point~(\ref{item:quadr_stability}). The analyticity of $q$ ensures that we can apply {\L}ojasiewicz's inequality to obtain stability within this critical manifold, and points (\ref{item:critical_points}) and (\ref{item:gradients}) shows that all critical points \emph{close} to $v$ are included in this variety and correspond to critical points of $q$. The proof of Lemma~\ref{lem:Lyapunov-Schmidt} is given in Appendix~\ref{app:LSproof}.

This lemma allows to write for any $u\in W$
\begin{align*}
    \tilde{Q}(u)- Q(M,\partial M) 
    &= \tilde{Q}(u) - \tilde{Q}(P(u)) + \tilde{Q}(P(u))-Q(M,\partial M) \\
    &\ge C_1(v) \norm{u - P(u)}_{H^{1/2}}^2 + \tilde Q(P(u)) - \tilde Q(v).
\end{align*}
There are now two possible cases. In the \emph{integrable} case (see precise definition below), for each $u$, $P(u)$ is a minimizer as well. Thus, $\tilde Q(P(u)) = \tilde Q(v)$ and the inequality~\eqref{eq:quadratic-stability} gives a distance to the set of minimizers.
In the non-integrable case, not any $P(u)$ is a minimizer, the stability follows from {\L}ojasiewicz's inequality for $q$, and is not necessarily quadratic.

\medskip
The definition of integrability for our setting closely mimics the usual one in
\cite{engelstein2022quantitativestabilityminimizingyamabe}. 
We use the notation $\cC_\cB$ for the critical points of $\tilde Q$ in $\cB$.
\begin{definition}[Integrability] \label{def:integrability}
    A function $v \in \cC_\cB$ is integrable if for all $\varphi \in \Ker (\nabla_{\mathcal{B}}^{2}\tilde{Q}(v))$ there exists a one-parameter family of funtions $(v_{t})_{t\in (-\delta,\delta)}$, with $v_{0} = v $, $\frac{\partial}{\partial t} \mid_{t=0} v_{t}=\varphi$, and $v_{t} \in \tr(\cC_\cB)$ for all $t$ sufficiently small.
\end{definition}

The following lemma shows that if $v$ is an integrable minimizer, the variety $P(W)$ coincides with the intersection $W\cap \cC_\cB$.

\begin{lemma} \label{lem:integrability}
   Assume that $v \in \tr(\mathcal{M})$. We use the notation introduced in Lemma~\ref{lem:Lyapunov-Schmidt}, taking a smaller neighborhood $W$ if necessary. Then $v$ is integrable if and only if $q$ is constant in a neighborhood of $0 \in K$. In particular, if $v \in \tr(\mathcal{M})$ is an integrable minimizer, then 
    \begin{align*}
        \cM_\cB \cap W = P(W).
    \end{align*}
\end{lemma}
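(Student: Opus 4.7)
The plan is to prove both directions of the equivalence and then deduce the ``in particular'' clause from the forward direction together with item~(\ref{item:critical_points}) of Lemma~\ref{lem:Lyapunov-Schmidt}. The hard direction is ``$v$ integrable $\Rightarrow$ $q$ constant'', and it will hinge on the interplay between the analyticity of $q$ and the minimizing property $q \ge q(0)$.

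The direction ``$q$ constant $\Rightarrow$ $v$ integrable'' is essentially bookkeeping. Given $\varphi \in K$, I would take the explicit curve $v_t := h(t\varphi, F(t\varphi))$ and check directly that $v_0 = v$ and $\dot v_0 = \varphi$, using $F(0) = 0$, $\nabla F(0) = 0$, together with the fact that $D_a h(0,0)$ restricted to $T_v\cB$ is the identity (which is a short calculation exploiting $\inprod{v^{n/(n-2)}}{\varphi} = 0$ for $\varphi \in T_v\cB$). Since $q$ is locally constant, $\tilde Q(v_t) = q(0) = Q(M,\partial M)$, so $v_t \in \cM_\cB \subset \cC_\cB$ and integrability holds.

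For the converse, I would first transfer integrability to the finite-dimensional reduction. Fix $\varphi \in K$ and let $(v_t) \subset \cC_\cB$ be the path supplied by integrability, with $v_0 = v$ and $\dot v_0 = \varphi$. For $|t|$ small, $v_t \in W$, so item~(\ref{item:critical_points}) of Lemma~\ref{lem:Lyapunov-Schmidt} gives $v_t = P(v_t) \in P(W)$; writing $v_t = h(a(t), F(a(t)))$ and using the same identification of $D_a h(0,0)|_K$ forces $a(0) = 0$ and $\dot a(0) = \varphi$. Criticality of $v_t$ in $\cB$, combined with item~(\ref{item:gradients}), then forces $\nabla q(a(t)) \equiv 0$, so $a(t)$ is a path of critical points of $q$ along which $q(a(t)) \equiv q(0)$.

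The decisive step, which I see as the main obstacle, is a Taylor-expansion argument that promotes this directional flatness to flatness in a full neighborhood of $0$. By item~(\ref{item:analiticity}), $q$ is real analytic near $0$; I would expand $q(a) - q(0) = \sum_{k \ge 1} q_k(a)$ with $q_k$ homogeneous of degree $k$ on $K$. The linear term vanishes because $\nabla \tilde Q(v) = 0$, and differentiating the relation of item~(\ref{item:gradients}) once at $a = 0$, together with $K = \Ker \nabla^2_\cB \tilde Q(v)$, shows $q_2 \equiv 0$. If $q \not\equiv q(0)$, let $k_0 \ge 3$ be minimal with $q_{k_0} \not\equiv 0$. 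Since $v \in \tr(\cM)$ is a minimizer, $q(a) \ge q(0)$ near $0$, forcing $q_{k_0}$ to be a non-negative form on $K$ (so $k_0$ is even); yet evaluating along the curves $a_\varphi(t) = t\varphi + O(t^2)$ produced above gives $t^{k_0} q_{k_0}(\varphi) + O(t^{k_0+1}) = 0$, whence $q_{k_0}(\varphi) = 0$ for every $\varphi \in K$ and therefore $q_{k_0} \equiv 0$, a contradiction. The delicate point is precisely that integrability supplies only one curve of critical points per direction in $K$, not a full $l$-dimensional family, and it is the combination of analyticity with the variational inequality $q \ge q(0)$ that upgrades ``directionally flat along paths'' to ``flat in a whole neighborhood''. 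Once this is established, the ``in particular'' clause is immediate: $P(W) \subset \cM_\cB$ because $q \equiv q(0)$ near $0$, and conversely any element of $\cM_\cB \cap W$ is a critical point hence fixed by $P$, so lies in $P(W)$.
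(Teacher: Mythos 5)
Your argument is correct and coincides with the paper's: both directions go through the Lyapunov--Schmidt items (\ref{item:critical_points}) and (\ref{item:gradients}) to transfer integrability to curves $a_t$ in $K$ along which $q$ is constant, and then use analyticity of $q$ to upgrade directional flatness to local constancy (the paper phrases this as ``$q$ vanishes to infinite order at $0$'', which is exactly your Taylor-expansion argument). One remark worth making: the appeal to the minimizing property ($q \ge q(0)$, so that $q_{k_0}$ is a non-negative form and $k_0$ is even) is a red herring --- your own Taylor expansion along the curves already yields $q_{k_0}(\varphi)=0$ for every $\varphi \in K$, hence $q_{k_0}\equiv 0$ and the contradiction, with no positivity needed; the paper's proof does not invoke minimality at this step either, and indeed the equivalence ``$v$ integrable $\Leftrightarrow$ $q$ locally constant'' holds for any critical point, not only minimizers.
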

\begin{proof}
    If $q$ is constant, for each $\eta \in K$ the map
    $t \mapsto \phi_t:= h(t\eta, F(t\eta))$ has the desired properties.
    Conversely, if $v$ is integrable,
    by Lemma~\ref{lem:Lyapunov-Schmidt} point~(\ref{item:critical_points}), for each direction $b \in K$ we may find a curve
    $\phi_t = h(a_t, F(a_t))$ such that $a_0 = 0$, $\partial_t a_t = b$ and $\partial_t q(a_t) = 0$ for all sufficiently small $t$. This shows that $q$ vanishes to infinite order at $0$ and hence, since $q$ is real analytic, $q$ is constant.
\end{proof}

\section{General manifolds: Proof of the main theorems.}
Here we put together the ingredients to prove Theorem \ref{thm:mtheor}. The proof proceeds by first considering the problem close to the set of minimizers and concluding by compactness.

\subsection{Local stability estimate.}
We start by local quantitative stability for the functional $\tilde Q$.
Given $\delta >0$ and $v_* \in \cM_\cB$, we define a $0$-homogeneous quantity that measures the distance to minimizers in a $\delta$-neighborhood of $v_*$
\begin{align*}
    d_{v_*,\delta}^{\partial M}(v) = \| v \|_{H^{1/2}}^{-1} \,\inf \left\{\|v-\tilde{v} \|_{H^{1/2}(\partial M)} \middle| \tilde{v}\in \cM_\cB \cap B_{H^{1/2}}(v_*,\delta)\right\}
 \end{align*}

\begin{lemma} \label{lem:local_stab_bdry}
Fix a minimizer $v_* \in \cM_\cB$. There exist $C_2= C_2(v_*)> 0$, an exponent $\gamma= \gamma(v_*) \ge 0$ and a radius $\delta (v_* )>0 $ such that for every $v \in \cB\cap B_{H^{1/2}}(v_*,\delta)$
         we have:
        \begin{align*}
            \tilde{Q}(v) -Q(M,\partial M) \geq C_2 d_{v_*,\delta}^{\partial M}(v))^{2+\gamma}.
        \end{align*}
     \end{lemma}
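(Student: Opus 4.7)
The plan is to combine the Lyapunov--Schmidt reduction of Lemma~\ref{lem:Lyapunov-Schmidt} with the classical {\L}ojasiewicz inequality applied to the analytic reduced function $q$. Fix $v_* \in \cM_\cB$, pick $\delta$ smaller than the radius of the neighborhood $W$ of that lemma, and let $v \in \cB \cap B_{H^{1/2}}(v_*, \delta)$. Writing $P(v) = h(a(v), F(a(v)))$ with $a(v) \in V \subset K$, I would start by splitting the deficit as
\begin{align*}
    \tilde Q(v) - Q(M,\partial M)
    = \left(\tilde Q(v) - \tilde Q(P(v))\right) + \left(q(a(v)) - q(0)\right).
\end{align*}
The first summand is bounded below by $C_{1}\|v - P(v)\|_{H^{1/2}}^{2}$ thanks to Lemma~\ref{lem:Lyapunov-Schmidt}(\ref{item:quadr_stability}). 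For the second, since $q$ is real analytic (item~(\ref{item:analiticity})) and has a local minimum at $0$ with $q(0) = Q(M,\partial M)$, the standard {\L}ojasiewicz inequality for analytic functions furnishes $\gamma \ge 0$ and $c > 0$ such that, shrinking $V$ if necessary,
\begin{align*}
    q(a) - q(0) \ge c\,\dist(a, Z)^{2+\gamma}, \qquad Z := \{a \in V : q(a) = q(0)\}.
\end{align*}

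The next step is to lift this estimate from $V$ back to $\cB$. Every $a \in Z$ satisfies $\tilde Q(h(a, F(a))) = q(a) = Q(M,\partial M)$, hence $h(a, F(a)) \in \cM_\cB$. Picking $\tilde a \in Z$ that realizes $\dist(a(v), Z)$ and setting $\tilde v := h(\tilde a, F(\tilde a))$, I would argue via the smoothness of $h$ and $F$ near $0$ (with $F(0) = 0$ and $\nabla F(0) = 0$) that $\tilde v \in \cM_\cB \cap B_{H^{1/2}}(v_*, \delta)$, after a final shrinking of $\delta$, and that
\begin{align*}
    \|v - \tilde v\|_{H^{1/2}}
    \le \|v - P(v)\|_{H^{1/2}} + \|P(v) - \tilde v\|_{H^{1/2}}
    \le \|v - P(v)\|_{H^{1/2}} + C\,|a(v) - \tilde a|.
\end{align*}

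To conclude, I would raise the last inequality to the power $2+\gamma$, apply $(x+y)^{2+\gamma} \le 2^{1+\gamma}(x^{2+\gamma} + y^{2+\gamma})$, and absorb the superfluous factor $\|v - P(v)\|_{H^{1/2}}^{\gamma} \le (2\delta)^{\gamma}$ into the quadratic lower bound, obtaining
\begin{align*}
    \|v - \tilde v\|_{H^{1/2}}^{2+\gamma}
    \lesssim \|v - P(v)\|_{H^{1/2}}^{2} + |a(v) - \tilde a|^{2+\gamma}
    \lesssim \tilde Q(v) - Q(M,\partial M).
\end{align*}
Since $\|v\|_{H^{1/2}}$ stays bounded above and below by positive constants on $B_{H^{1/2}}(v_*, \delta)$, the normalization appearing in $d^{\partial M}_{v_*,\delta}$ is harmless and the claimed inequality follows. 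The main obstacle I anticipate is invoking the {\L}ojasiewicz inequality in the distance-to-minimum form above (rather than its gradient form) and checking carefully that the analytic zero set $Z$ does parametrize an $H^{1/2}$-close subset of $\cM_\cB$ via the chart $a \mapsto h(a, F(a))$; this is exactly where the real analyticity and the fine regularity of $h$ and $F$ provided by Lemma~\ref{lem:Lyapunov-Schmidt} become indispensable.
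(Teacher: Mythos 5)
Your proposal is correct and follows essentially the same route as the paper: a Lyapunov--Schmidt splitting of the deficit into a perpendicular part controlled quadratically via Lemma~\ref{lem:Lyapunov-Schmidt}(\ref{item:quadr_stability}) and a tangential part controlled by the {\L}ojasiewicz inequality for the analytic reduced function $q$, followed by lifting the finite-dimensional estimate back to $\cB$ through the chart $a\mapsto h(a,F(a))$. The two minor presentational differences are: (i) you treat all cases uniformly by observing that the minimum-level set $Z$ maps onto $\cM_\cB\cap W$, whereas the paper first disposes of the nondegenerate and integrable cases separately (where $P(u)=v$ or $q$ is constant, respectively, so $\gamma=0$) and only invokes {\L}ojasiewicz in the nonintegrable case; and (ii) you assert directly that the {\L}ojasiewicz exponent near a minimum can be written as $2+\gamma$ with $\gamma\ge 0$, while the paper takes an exponent $\alpha>0$ and then sets $2+\gamma=\max\{2,\alpha\}$ --- your claim is correct (a $C^2$ function satisfies $q(a)-q(0)\le C\,\dist(a,Z)^2$ near a minimum, forcing $\alpha\ge 2$), but it deserves a one-line justification rather than being taken for granted; the paper's $\max$ sidesteps that argument. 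Your final absorption step, using boundedness of $\|v-P(v)\|_{H^{1/2}}$ on $W$ to trade the exponent $2+\gamma$ for $2$ on the quadratic piece, is exactly what is needed to combine the two bounds, as is your remark that $\|v\|_{H^{1/2}}$ is bounded away from $0$ and $\infty$ on the small ball so the normalization in $d^{\partial M}_{v_*,\delta}$ is harmless.
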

Before proving this Lemma, we show that the local stability for $\tilde Q$ indeed implies local stability for $Q$.
For $u \in H^1(M)$ and $u_* \in \cM$ with $\tr u_* \in \cM_\cB$, we define the analogous local distance
 \begin{align*}
    d_{u_*,\delta}^{ M}(u) = \| u \|_{H^{1}(M)}^{-1}\, {\inf \{\|u-\tilde{u} \|_{H^{1}(M)} : \tilde{u}\in \mathcal{M} \cap B_{H^1}(u, \delta), \tr(\tilde u) \in \cB \}}.
  \end{align*}

\begin{proposition}[Local stability estimate] \label{prop:local_stab}
    Let $(M,g)$ be a compact Riemannian manifold with boundary $\partial M$, and $u_* \in \mathcal{M}$ with $\tr u_* \in \cB$. There exist constants $\delta(u_*)>0$, $C_3(u_*)>0$ and $\gamma(u_*) \ge 0$ such that for all $u \in B_{H^1}(u_*,\delta)$ with $\tr(u) \in \cB$ 
    \begin{align*}
        Q(u) -Q(M,\partial M) \geq C_3 d_{u_*,\delta}^{ M}(u) ^{2 +\gamma}
    \end{align*}
    If $v$ is integrable or non-degenerate, we may take $\gamma =0$.
\end{proposition}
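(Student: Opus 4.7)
The plan is to transfer the boundary stability of Lemma~\ref{lem:local_stab_bdry} to $M$ via the harmonic extension operator $E$, complemented by a coercivity bound for $Q - \tilde Q$. Set $v := \tr(u) \in \cB$ and $v_* := \tr(u_*) \in \cM_\cB$. Continuity of the trace gives $\|v - v_*\|_{H^{1/2}} \le C\|u - u_*\|_{H^1} \le C\delta$, so shrinking $\delta$ if necessary Lemma~\ref{lem:local_stab_bdry} applies and yields
$$\tilde Q(v) - Q(M,\partial M) \ge C_2 \bigl( d^{\partial M}_{v_*,\delta_0}(v) \bigr)^{2+\gamma},$$
with $\gamma = \gamma(v_*) \ge 0$; if $v_*$ is integrable or non-degenerate, Lemmas~\ref{lem:Lyapunov-Schmidt} and~\ref{lem:integrability} give $\gamma = 0$.

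\textbf{Coercivity of $Q - \tilde Q$.} Write $w := u - Eu \in H^1_0(M)$. Since $v \in \cB$ the denominators of $Q(u)$ and $\tilde Q(v)$ are both equal to $1$, while the boundary terms in their numerators coincide since $w$ vanishes on $\partial M$. Expanding the quadratic expression, integrating by parts, and using $L_g Eu = 0$ together with $w|_{\partial M} = 0$ to cancel the cross terms, one finds
$$Q(u) - \tilde Q(v) = \int_M \bigl( c_n |\nabla w|^2 + R_g w^2 \bigr) \, d\text{vol}_g.$$
The assumption $R_g \ge 0$ and the Poincar\'e inequality on $H^1_0(M)$ then yield $Q(u) - \tilde Q(v) \ge c_0 \|w\|_{H^1}^2$. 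This is the main technical step and the one that uses the sign condition $R_g \ge 0$ essentially: it both ensures $E$ is well-defined and provides the coercivity via Poincar\'e.

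\textbf{Distance comparison and conclusion.} Given a near-optimal $\tilde v \in \cM_\cB \cap B_{H^{1/2}}(v_*, \delta_0)$, set $\tilde u := E\tilde v \in \cM$. Since $u_* = Ev_*$ and $E$ is bounded, shrinking $\delta_0$ guarantees $\tilde u \in B_{H^1}(u,\delta)$. Writing $u - \tilde u = w + E(v - \tilde v)$ and using continuity of $E$ gives
$$\|u - \tilde u\|_{H^1} \le \|w\|_{H^1} + C\|v - \tilde v\|_{H^{1/2}}.$$
Since $\|u\|_{H^1}$ and $\|v\|_{H^{1/2}}$ are comparable and bounded away from $0$ on this neighborhood, dividing by $\|u\|_{H^1}$ and taking the infimum over $\tilde v$ leads to
$$d^M_{u_*,\delta}(u) \le C\bigl( \|w\|_{H^1} + d^{\partial M}_{v_*,\delta_0}(v) \bigr).$$
Because $\|w\|_{H^1} \le C\delta$ is bounded, $\|w\|_{H^1}^{2+\gamma} \le C\|w\|_{H^1}^2 \le C'(Q(u) - \tilde Q(v))$; combining with the boundary stability and the elementary inequality $a^p + b^p \gtrsim (a+b)^p$ for $p \ge 1$,
$$Q(u) - Q(M,\partial M) \gtrsim \|w\|_{H^1}^{2+\gamma} + \bigl( d^{\partial M}_{v_*,\delta_0}(v) \bigr)^{2+\gamma} \gtrsim \bigl( d^M_{u_*,\delta}(u) \bigr)^{2+\gamma},$$
which completes the proof.
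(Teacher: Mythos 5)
Your proposal is correct and follows essentially the same approach as the paper: decompose $u = Ev + w$ with $w := u - E(\tr u) \in H^1_0(M)$, use the $L_g$-orthogonality of $w$ and $Ev$ to split $Q(u) = \tilde Q(v) + \langle w, L_g w\rangle/\|u\|_{L^p(\partial M)}^2$, apply Lemma~\ref{lem:local_stab_bdry} to the boundary part and Poincar\'e (using $R_g \ge 0$) to the interior part, then recombine with an elementary power inequality. Your distance-comparison step (constructing $\tilde u = E\tilde v$ from a near-optimizer $\tilde v \in \cM_\cB$ and bounding $\|u-\tilde u\|_{H^1} \le \|w\|_{H^1} + C\|v - \tilde v\|_{H^{1/2}}$) spells out explicitly what the paper leaves compressed in its final chain of inequalities.
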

\begin{proof}
    First, we transfer the problem to the boundary by using the boundary functional and the extension operator $E$ defined in \eqref{eq:def_harmonic_extension}. Every $u \in H^{1}(M)$ can be written as
    \begin{align*}
        u = u_{0} + E v
    \end{align*}
    where $v = \tr(u)$, and $u_{0} \in H^{1}_0(M)$ satisfies
    \begin{align*}
        c_{n}\int_{M} \nabla u_{0} \cdot \nabla (E v) + \int_{M} R_{g} u_{0} Ev &= 0.
    \end{align*}
 
 Thus, we find
    \begin{align*}
        Q(u) &= \tilde{Q}(v) + \frac{\langle u_{0}, (-c_{n}\Delta + R_{g})u_{0}\rangle}{\|u\|_{L^{2\frac{n-1}{n-2}}(\partial M)}^{2}} 
     \end{align*}
     Recall that $Y = \tilde{Y}$ (Lemma~\ref{lem:infequa}) and the standard trace Sobolev inequality, 
 $$
\norm{\tr(u)-\tr(u_*)}_{H^{1/2}(\partial M)} \le C \norm{u - u_*}_{H^1(\partial M)}, 
$$
so for $\delta(u_*) >0$ sufficiently small, we can apply Lemma~\ref{lem:local_stab_bdry} around $v_*:= \tr u_*$ to  estimate
     \begin{align*}
             Q(u) -Q(M,\partial M) &\geq C_2(v_*) d_{v_*,\delta}^{\partial M}( \tr u))^{2+\gamma} + \frac{\langle u_{0}, (-c_{n}\Delta + R_{g})u_{0}\rangle}{\|u\|_{L^{2\frac{n-1}{n-2}}(\partial M)}^{2}}.
             \end{align*}

Using Poincaré's inequality for the second term, the elementary inequality $a^{2+\gamma} +b^2 \ge C (a^2 + b^2)^{1+\gamma/2}$ valid for $0 \le a,b \le C$ and the Sobolev embedding gives
        \begin{align*}
   Q(u) -Q(M,\partial M)              &\geq C_2(v_*) d_{\delta}^{\partial M} (u,\tr(\mathcal{M}_{1}))^{2+\gamma} + c_{2}\frac{\|\varphi_{0}\|_{H^{1}(M)}^{2}  }{\|u\|_{H^{1}(M)}^{2}}\\
        % &= c \left(\frac{\inf\{ \|u-\tr(v) \|_{H^{1/2}(\partial M)} : v \in \mathcal{M}_{1}\}}{\| u\|_{H^{1/2}(\partial M)}}\right)^{2+\gamma} + \frac{c\|\varphi_{0}\|_{H^{1}(M)}^{2}  }{\|u\|_{H^{1}(M)}^{2}}\\
        % &= c \left(\frac{\inf\{ \inf \{\|\tilde{u}-\tilde{v}  \|_{H^{1}(M)} : \tilde{v},\tilde{u} \in H^{1}(M) \wedge \tr(\tilde{v})=v, \tr(\tilde{u})=u \}  : v \in \mathcal{M}_{1}\}}{\| u\|_{H^{1/2}(\partial M)}}\right)^{2+\gamma}\\
        % & \geq c \left( \frac{\inf \{ \|Eu - E(\tr(v)) \|_{H^{1}(M)} : v \in \mathcal{M}_{1} \}}{\| u\|_{H^{1/2}(\partial M)}}\right)^{2+\gamma} + \frac{c\|\varphi_{0}\|_{H^{1}(M)}^{2}  }{\|u\|_{H^{1}(M)}^{2}}\\
        % &= c \left( \frac{\inf \{ \|Eu - v \|_{H^{1}(M)} : v \in \mathcal{M}_{1} \}}{\| u\|_{H^{1/2}(\partial M)}}\right)^{2+\gamma} + \frac{c\|\varphi_{0}\|_{H^{1}(M)}^{2}  }{\|u\|_{H^{1}(M)}^{2}}\\
        &\gtrsim C_2(v_*) \left(  \frac{\inf \{ \|u - \tilde u \|_{H^{1}(M)} : v \in \mathcal{M}_{1}  \}}{\|\varphi\|_{H^{1}(M)} + \|u_{0}\|_{H^{1}(M)}}  \right)^{2+\gamma}\\
        &\gtrsim C_2(v_*)  d_{u_*,\delta}^{ M}(u)^{2+\gamma}
        \qedhere
    \end{align*}
\end{proof}

We now prove the quantitative stability near a minimizer of $\tilde Q$.

\begin{proof}[Proof of Lemma~\ref{lem:local_stab_bdry}]
We take $u \in \cB$ and fix $v \in \cM_\cB$ which will play the role of $v_*$. With the notations of Lemma~\ref{lem:Lyapunov-Schmidt} we take $\delta >0$ small enough so that $v \in W$. Throughout the proof, $C(v)$ will denote a positive constant that may depend on $v$ and the data of the problem, but not on $u$. We write as before, from Lemma~\ref{lem:Lyapunov-Schmidt} (~\ref{item:quadr_stability}) 
\begin{align} \label{eq:separation_LS}
  \tilde{Q}(u)- Q(M,\partial M)  
      &\ge C(v) \norm{u - P(u)}_{H^{1/2}}^2 + \tilde Q(P(u)) - \tilde Q(v).
\end{align}

The second term is trivial to estimate if $v$ is \textbf{nondegenerate} ( the case $l=0$) or $v$ \textbf{integrable}. In the first case, we have $P(u) = v$, in the second case from Lemma~\ref{lem:integrability} we conclude that $\tilde Q(P(u)) =\tilde Q(v)$ and $P(u)\in W \cap \cM_\cB$ . In either case, we have
\begin{align*}
    \tilde{Q}(u)- Q(M,\partial M)  
          \ge C_1(v) d^{\partial M}_{v,\delta} (u)^2 \norm{u}_{H^{1/2}}^2 \ge C_2(v) d^{\partial M}_{v,\delta} (u)^2 \norm{u}_{L^p}^2 
           = d^{\partial M}_{v,\delta} (u)^2 
\end{align*}

In the case that 
$v$ is \textbf{nonintegrable}, recall that $P(u) = h(a,F(a))$ for some $a\in V\subset K$.
We use the {\L}ojasiewicz inequality (recall that $q$ is analytic) and obtain $C_{L} > 0$ and  $\alpha > 0$ and $a' \in q^{-1}(\{q(v)\})$ such that
\begin{align*}
    \tilde Q(P(u)) - \tilde Q(v) &= q(a) - q(0)\\
    &\ge C_{L}\norm{a-a'}_K^{\alpha}.
\end{align*}
Here, the distance is with respect to any Euclidean norm in $V \subset K$. 
But
\[
q^{-1}(\{q(v)\}) = \{a \in V | h(a,F(a)) \in \cM_\cB \cap W \}.
\]
Furthermore, since $\nabla F(0)= 0$, and $h$ is to leading order an isometry, up to taking a smaller neighborhood $V$ we find for any $a,a'\in V$
\begin{align*}
    \norm{a -a'}_K \ge C(v)  \norm{a -a'}_{H^{1/2}} 
    &\ge C(v) \norm{(a,F(a)) -(a',F(a'))}_{H^{1/2}} \\
    &\ge C(v) \norm{h(a,F(a)) -h(a',F(a'))}_{H^{1/2}},
\end{align*}
so, with $v':= h(a', F(a'))$
\begin{align*}
   \tilde Q(P(u)) - \tilde Q(v)\ge C \norm{P(u) - P( v')}_{H^{1/2}}^\alpha.
\end{align*}
Having established this, with $2+\gamma = \max\{2,\alpha\}$, we return to \eqref{eq:separation_LS} and write
\begin{align*}
      \tilde{Q}(u)- Q(M,\partial M)   
      &\ge C(v) \norm{u - P(u)}_{H^{1/2}}^2 + C \norm{P(u) - P(\tilde v)}_{H^{1/2}}^{\alpha}  \\
      &\ge C(v) \norm{u -\tilde v}_{H^{1/2}}^{2+\gamma} \\
      &\ge C(v) d^{\partial M}_{v,\delta} (u)^{2 +\gamma}.
\end{align*}
which is the desired conclusion.

\end{proof}

\subsection{Compactness and global stability}
In this section we finish the proof of the global stability, using our local result and some classic compactness arguments. This last ingredient can is contained in the results of \cite[Theorem 3.5, the case $a=0$, $b=1$]{araujo2004existence} or for dimensions $3$ to $6$ in \cite{kim2021compactness}. Since these authors are concerned with stronger norms, their results require heavier machinery. We give the elementary proof for our case in Appendix~\ref{app:compactness}
 \begin{lemma}\cite{araujo2004existence,kim2021compactness}\label{lem:compactness}
   Let $(M,g)$ a compact Riemannian manifold with boundary of dimension $n \geq 3$ and such that $Q(M,\partial M) < Q(B^n,\partial B^{n})$. Let $(u_{k})_{k\in \mathbb{N}}$ be a minimizing sequence for $Q$, i.e. $Q(u_{k}) \to Q(M,\partial M)$. Then, up to a subsequence, there exists $v \in \mathcal{M} \cap \cB$ such that $u_{k} \norm{u_k}^{-1}_{L^p(\partial M)} \to v$ strongly in $H^{1}(M)$.
 \end{lemma}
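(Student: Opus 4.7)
The plan is to run the classical concentration-compactness argument, adapted to the trace-Sobolev setting, where the strict inequality $Q(M,\partial M) < Q(B^n,\partial B^n)$ plays the role of the Aubin gap condition ruling out concentration at boundary points.

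First, with $p=2(n-1)/(n-2)$, I normalize so that $\|u_k\|_{L^p(\partial M)}=1$ and obtain a uniform $H^1(M)$ bound. Since $R_g \ge 0$, the only indefinite term in the numerator of $Q(u_k)$ is $\hat c_n \int_{\partial M} h_g u_k^2 \, d\sigma_g$, which is bounded via H\"older and the normalization. This controls $\|\nabla u_k\|_{L^2(M)}$, and the Poincar\'e-type inequality $\|u\|_{L^2(M)} \le C(\|\nabla u\|_{L^2(M)} + \|u\|_{L^2(\partial M)})$ gives the full $H^1$ bound. Extracting a subsequence yields $u_k \rightharpoonup v$ in $H^1(M)$, and $u_k \to v$ strongly in $L^2(M)$ and in $L^q(\partial M)$ for all $q < p$ by Rellich-Kondrachov and the compact trace embedding.

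The heart of the proof is showing $\|v\|_{L^p(\partial M)} = 1$. Set $w_k := u_k - v$; the Brezis-Lieb lemma on $\partial M$ gives $t + s = 1$, where $t := \|v\|_{L^p(\partial M)}^p$ and $s := \lim_k \|w_k\|_{L^p(\partial M)}^p$. The key analytic input is a Cherrier-type trace inequality: for every $\epsilon > 0$ there exists $C_\epsilon$ with
\[
(Q(B^n,\partial B^n) - \epsilon)\|u\|_{L^p(\partial M)}^2 \le c_n\|\nabla u\|_{L^2(M)}^2 + C_\epsilon\|u\|_{L^2(M)}^2 \qquad \forall u \in H^1(M),
\]
proved by a partition of unity, flattening the boundary, and invoking the sharp Euclidean trace inequality on the half-space. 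Applied to $w_k$, using $w_k \to 0$ strongly in $L^2(M)$, one gets $(Q(B^n,\partial B^n) - \epsilon)\, s^{2/p} \le c_n \liminf \|\nabla w_k\|_{L^2(M)}^2$. Combining this with the weak-convergence decomposition $\|\nabla u_k\|^2 = \|\nabla v\|^2 + \|\nabla w_k\|^2 + o(1)$, the strong $L^2$ passage of the curvature terms, and the variational bound $c_n\|\nabla v\|^2 + \int R_g v^2 + \hat c_n \int h_g v^2 \ge Q(M,\partial M)\, t^{2/p}$ (valid when $v \ne 0$), I arrive at
\[
Q(M,\partial M) \ge Q(M,\partial M)\, t^{2/p} + (Q(B^n,\partial B^n) - \epsilon)\, s^{2/p}.
\]
Since $2/p < 1$, the strict concavity of $\tau \mapsto \tau^{2/p}$ yields $s^{2/p} + t^{2/p} > 1$ whenever $s, t \in (0,1)$, so choosing $\epsilon$ small and invoking $Q(M,\partial M) < Q(B^n,\partial B^n)$ forces $s = 0$. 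The degenerate case $v = 0$ is excluded directly, since Cherrier alone would then yield $Q(M,\partial M) \ge Q(B^n,\partial B^n) - \epsilon$ for every $\epsilon$.

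Once $t = 1$ is established, $v$ is a minimizer in $\cB \cap \cM$. Strong $H^1$ convergence follows by energy comparison: $Q(v) = \lim Q(u_k) = Q(M,\partial M)$ together with the strong convergences in $L^2(M)$, $L^2(\partial M)$, and $L^p(\partial M)$ forces $\|\nabla u_k\|_{L^2(M)}^2 \to \|\nabla v\|_{L^2(M)}^2$, which upgrades the weak $H^1$ convergence to strong. The main obstacle is establishing Cherrier's inequality with constant arbitrarily close to the sharp value $Q(B^n,\partial B^n)$: one must localize carefully in normal coordinates near $\partial M$, apply Ho's sharp half-space inequality on each patch, and absorb all metric-dependent remainders into the $C_\epsilon\|u\|_{L^2(M)}^2$ term.
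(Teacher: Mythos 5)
Your proof is correct and follows essentially the same concentration-compactness strategy as the paper's Proposition~\ref{prop:M1_compact}: normalize on $\partial M$, extract a weak $H^1$ limit $v$, apply the Brezis--Lieb lemma to split $\|u_k\|_{L^p(\partial M)}^p = \|v\|_{L^p(\partial M)}^p + \|w_k\|_{L^p(\partial M)}^p + o(1)$, and rule out mass escape via the strict gap $Q(M,\partial M) < Q(B^n,\partial B^n)$ together with a near-sharp localized trace estimate and the subadditivity of $\tau \mapsto \tau^{2/p}$. The only substantive difference is that you invoke the Cherrier $\epsilon$-inequality as a black box, whereas the paper establishes the equivalent lower bound $\liminf_k Q(w_k) \ge Q(B^n,\partial B^n) - \delta/2$ for weakly null sequences directly, through an IMS localization identity and an averaged family of boundary partitions of unity (quoting \cite{benguria2020existence} for the averaging trick that controls the $\sum_j(\phi_j^2-\phi_j^p)$ overlap term); this is a self-contained proof of precisely the Cherrier-type ingredient you cite. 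Your handling of the degenerate case $v=0$, the concavity step forcing $s=0$, and the energy-comparison upgrade from weak to strong $H^1$ convergence all match the paper's reasoning.
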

We are now ready to prove the quantitative stability. We denote by $\cM_1$ the minimizers with normalized boundary trace, i.e., $\cM_1 =\{v \in \cM | \tr v \in \cB\}$.
\begin{proof}[Proof of Theorem~\ref{thm:mtheor}]
 Given $v \in \mathcal{M}_{1}$, by Proposition~\ref{prop:local_stab} there exist constants  $\delta(v), \gamma(v)$ and $C_3(v)$ for wich the local estimate holds. Since the set $\mathcal{M}_{1}$ is compact in $H^{1}( M)$, we can cover $\mathcal{M}_{1}$ by balls $\mathcal{B}_{H^{1}(M)}(v,\delta(v)/2)$ and take a finite subcover $\{\mathcal{B}_{H^{1}(M)}(v_{i},\delta(v_{i})/2)\}_{i\in I}$. Then we define:
    \begin{align*}
        \delta_{0} &:= \min_{i\in I} \delta(v_{i})/2 >0\\
        \gamma_{0} &:= \max_{i\in I} \gamma(v_{i}) <\infty \\
        c_{0} &:= \min_{i\in I} c(v_{i}) >0
    \end{align*}
Now argue by contradiction and assume that for each $n \in \N$, there exists $u_n \in H^1(M)$ such that
\[
Q(u_n) - Q(M,\partial M) \le n^{-1} \, d(u_n, \cM_1)^{2 + \gamma_0}.
\]
since both sides are homogeneous, we can consider $u_n \in \mathcal{B}$.
In this case, $\norm{u_n}_{H^1}$ is uniformly bounded from below and so $d(u_n, \cM_1)$ is bounded from above. So $(u_n)$ is a minimizing sequence and by Lemma~\ref{lem:compactness}, up to a subsequence, it converges strongly to some $v \in \cM_1$. In particular, there is $v_i$ in the finite subcover such that for all sufficiently large $n$, $\norm{v_i - u_n}_{H^{1/2}} < 2 \delta_0 \le \delta(v_i)$, in which case we obtain a contradiction with Lemma~\ref{lem:local_stab_bdry}. 
 
\end{proof}

We follow with the proof of the main corollary of Theorem \ref{thm:mtheor}, which discusses a conformally invariant version of the quantitative stability inequality obtained in this paper. The proof relies on the conformally invariance of both the conformal laplacian $L_{g} = -\frac{4(n-1)}{n-2}\Delta_{g} + R_{g}$ and of the boundary Neumman operator $B_{g} = \frac{4(n-1)}{n-2}\frac{\partial}{\partial \nu_{g}} + 2(n-1)h_{g}$. The proof closely follows the ideas in \cite{engelstein2022quantitativestabilityminimizingyamabe}, and we include it for the sake of completeness.

\begin{proof}[Proof of Corollary \ref{corollary}]

   Applying Theorem \ref{thm:mtheor} and the Sobolev trace inequality on $(M,g)$, if we plug $\tilde{g}=u^{\frac{4}{n-2}}g$ we obtain:
     \begin{align*}
         \mathcal{E}_{\tilde{g}} - Q(M,\partial M) = Q_{g}(u) - Q(M,\partial M) &\geq c \left(\frac{\inf_{v\in \mathcal{M}} \{\|u-v \|_{H^{1}(M)}:v \in \mathcal{M} \}}{\| u\|_{H^{1}(M)}}\right)^{2+\gamma}\\
         &\geq c \left(\frac{\inf_{v\in \mathcal{M}} \{\|u-v \|_{H^{1}(M)}:v \in \mathcal{M}\}}{\|u\|_{L^{2\frac{n-1}{n-2}}(\partial M)}}\right)^{2+\gamma} \\ &= c \left(\frac{\inf \{d_{[g]}^1(g,\tilde{g}) : \tilde{g}\in \mathcal{M}\}}{\text{vol}_{g}(\partial M)^{1/2_{*}}}\right)^{2+\gamma}
     \end{align*}
   The second inequality follows from the fact that if u is sufficiently close to the set of minimizers, i.e., $d(u, \mathcal{M}) \leq \delta_0$, the denominators are comparable. Otherwise, when $d(u, \mathcal{M})>\delta_0$, note that the quantity $\inf _{v \in \mathcal{M}}\|u-v\|_{H^{1}(M)} /\|u\|_{L^{\frac{2(n-1)}{n-2}}(\partial M)}$ is bounded above by 1. By choosing $c$ sufficiently small, the inequality holds. The proof of the subsequent inequality follows the same ideas.
\end{proof}
\subsection{Proof of Proposition \ref{prop:ASpint}}
Let $u_{0} \in \mathcal{M}$ be nonintegrable, and $q: U \to \mathbb{R}$ the perturbation function given by Lyapunov-Schmidt reduction, where $U \subset \text{ker}\nabla_{\mathcal{B}}^{2} \tilde{Q}(u_{0}) \cong \mathbb{R}^{l}$. Since $q$ is analytic, see \cite{Ottis,hoflow}, we can expand it in power series
\begin{align*}
    q(x) = q(0) + \sum_{j \geq p} q_{j}(x)
\end{align*}
where each $q_{j}$ is a degree $j$ homogeneous polynomial and $p$ is chosen so that $q_{p}(0)\neq 0$. As in \cite{hoflow}, we call this $p$ order of integrability of $u_{0}$. Next, we follow with the classic Adams-Simon positivity condition.
\begin{definition}[$\text{AS}_{p}$ condition]
    A minimizer $u_{0} \in \tr(\mathcal{M})$ satisfies $\text{AS}_{p}$ condition if it is nonintegrable and $q_{p}\mid_{\mathbb{S}^{l-1}}$ attains a positive maximum for some $\hat{v} \in \mathbb{S}^{l-1}\subset K$. Where $\mathbb{S}^{l-1}$ is the unit sphere with respect to the $L^{2\frac{n-1}{n-2}}(\partial M)$-norm in $K \subset H^{1/2}(\partial M)$. 
\end{definition}

As we did earlier in the local stability section, we prove a similar statement on the boundary and, after some manipulations, prove Proposition \ref{prop:ASpint}. The boundary lemma for this result is:
\begin{lemma}[Boundary $\text{AS}_{p}$ lemma]\label{lem:ASpboundary}
    Fix a compact Riemannian manifold with boundary of dimension $n \geq 3$, and $p \geq 3$. Let $v_{0}$ be a nonintegrable critical point of $\tilde{Q}$ and suppose it satisfies the $\text{AS}_{p}$ condition. Then there exists a sequence $(u_{i})_{i} \subset H^{1/2}(\partial M)$ with $u_{i}\to v_{0}$ in $H^{1/2}(\partial M)$, such that:
    \begin{align*}
        \lim_{i \to \infty} \frac{\tilde{Q}(u_{i})-Q(M,\partial M)}{\|u_{i} - v_{0} \|_{H^{1/2}(\partial M)}^{p-\alpha}} = 0 \quad \forall \alpha >0
    \end{align*}
\end{lemma}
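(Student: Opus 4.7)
The plan is to use the Lyapunov--Schmidt reduction from Lemma~\ref{lem:Lyapunov-Schmidt} to move along the finite-dimensional critical variety $P(W)$ in a direction where the analytic reduced function $q$ grows only at the slowest possible rate $t^p$. The AS$_p$ hypothesis supplies such a direction. Concretely, fix a unit vector $\hat v \in \mathbb{S}^{l-1} \subset K$ at which $q_p$ attains its positive maximum, choose a positive sequence $t_i \to 0$ with $t_i\hat v \in V$, and set
$$
u_i := h(t_i\hat v, F(t_i\hat v)) \in P(W) \subset \cB.
$$
Continuity of $h$ and $F(0)=0$ give $u_i \to v_0$ in $H^{1/2}(\partial M)$.

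For the denominator, note that $\nabla F(0) = 0$ from Lemma~\ref{lem:Lyapunov-Schmidt}~(\ref{item:parametrization}) implies $\|F(t_i\hat v)\|_{H^{1/2}} = O(t_i^2)$. Writing $w_t := v_0 + t\hat v + F(t\hat v)$, the tangency $\hat v \in T_{v_0}\cB$ forces the derivative of $t \mapsto \|w_t\|_{L^{2(n-1)/(n-2)}}$ to vanish at $t=0$, so the $L^{2(n-1)/(n-2)}$-normalization in the definition of $h$ is only a second-order correction. Hence
$$
u_i - v_0 = t_i\hat v + O_{H^{1/2}}(t_i^2),
$$
and since $K$ is finite-dimensional all norms on it are equivalent, giving $c_1 t_i \leq \|u_i - v_0\|_{H^{1/2}} \leq c_2 t_i$ for $i$ large. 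For the numerator, since $v_0$ must be a minimizer for the claim to be nontrivial, $q(0) = \tilde Q(v_0) = Q(M,\partial M)$, and the analyticity of $q$ (Lemma~\ref{lem:Lyapunov-Schmidt}~(\ref{item:analiticity})) together with the homogeneity of $q_p$ yield
$$
\tilde Q(u_i) - Q(M,\partial M) = q(t_i\hat v) - q(0) = t_i^p q_p(\hat v) + O(t_i^{p+1}) \leq C t_i^p.
$$

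Combining the two estimates, for any $\alpha > 0$,
$$
\frac{\tilde Q(u_i) - Q(M,\partial M)}{\|u_i - v_0\|_{H^{1/2}}^{p-\alpha}} \leq \frac{C\, t_i^p}{c_1^{p-\alpha}\, t_i^{p-\alpha}} = C' t_i^\alpha \to 0,
$$
which is the claim. The only real technical point is the linear-in-$t_i$ lower bound on $\|u_i - v_0\|_{H^{1/2}}$; this reduces to the Taylor expansion above combined with the equivalence of norms on the finite-dimensional kernel $K$. Everything else is bookkeeping with the analytic expansion of $q$ and the properties of $h$ and $F$ furnished by Lemma~\ref{lem:Lyapunov-Schmidt}.
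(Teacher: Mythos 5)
Your proof is correct and follows essentially the same route as the paper's: take $u_i = h(t_i\hat v, F(t_i\hat v))$ along the $\mathrm{AS}_p$-maximizing direction, bound the denominator below by $c t_i$ using $\nabla F(0)=0$ and the normalization being a second-order effect, and bound the numerator above by $C t_i^p$ from the analytic expansion of $q$. The paper's own argument is terser on both norm estimates, so your extra justification (the Taylor expansion of the $L^p$ normalization and the norm equivalence on the finite-dimensional kernel $K$) is a welcome elaboration rather than a deviation.
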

\begin{proof}[Proof]
  We use the notation of Lemma~\ref{lem:Lyapunov-Schmidt}. 
    Let $\phi \in \mathbb{S}^{l-1}$ be the maximum of $q_{p}$, for $t$ sufficiently small, we define $u_t = h(t \phi, F(t\phi))$. 
     From the properties of $h$ and $F$ we see that
    \begin{align*}
        \|u_{t}-u_{0} \|_{H^{1/2}(\partial M)} = 
        \norm{h(t\phi, F(t\phi)) - h(0,0)} \ge C |t|
        % \|\hat{u}_{t}+ F(\hat{u}_{t}) \|_{H^{1/2}(\partial M)} &= \|t \hat{v}+ F(t\hat{v}) \|_{H^{1/2}(\partial M)}\\
        % &\leq t \|\hat{v} \|_{H^{1/2}(\partial M)} + \|F(t\hat{v}) \|_{H^{1/2}(\partial M)}\\
        % &\leq c t \|\hat{v} \|_{H^{1/2}(\partial M)}  =Ct
    \end{align*}
    The definition of $q$ yields
    \begin{align*}
        \tilde{Q}(u_{t}) - \tilde{Q}(v_{0}) = q(t \phi) - q(0) = \sum_{j \geq p}t^j q_{j}(\phi).
    \end{align*}
    With this we can establish for all sufficiently small $t$
    \begin{align*}
        |\tilde{Q}(u_{t}) - \tilde{Q}(u_{0})| 
        \le t^p q_p(\phi)/2.
    \end{align*}
   This implies the desired result thanks to the norm estimate obtained before.
\end{proof}
\par We now turn to the proof of Proposition \ref{prop:ASpint}, using arguments similar to those employed before.
\begin{proof}[Proof of Proposition~\ref{prop:ASpint}]
Apply $E$ to the functions $u_t$ defined in the proof of Lemma \ref{lem:ASpboundary}.
 
\end{proof}

Let us finish with some comments on the condition $AS_{p}$ in the case with boundary. In order to show the existence of a slowly converging Yamabe flow in manifolds with boundary, Ho and Shin in \cite[Section 5]{hoflow} provide an explicit example of a metric satisfying the $AS_{3}$ condition. For instance, if one takes $(M,g)$ to be a closed n-dimensional Riemannian manifold which is scalar-flat, i.e. $R_g \equiv 0$, and considers the $3$-dimensional unit ball $B^{3}$ equipped with the flat metric $g_0$, then the product manifold $M \times B^{3}$, equipped with the product metric
$g \oplus c^{-1}g_{0}$ is degenerate and satisfies $AS_{3}$. Here c is a constant chosen. A key ingredient to show it is that the Steklov eigenvalues of $B^{3}$ are well known.
Nevertheless, this is not an explicit example of a non-integrable minimizer of the Yamabe energy. 

An interesting problem in the boundary case would be to determine explicit examples of these nonintegrable minimizers satisfying $AS_{p}$, for $p\geq 4$. We expect bifurcations from the constant to occur for the family of product manifolds $M_T := \bbS^1(T) \times B^{n-1}(1)$ as the parameter $T$ increases. Since the equations involve a nonlinearity on the boundary, it is not obvious how to achieve such a result.

\appendix

\section{Details on the Lyapunov-Schmidt reduction} \label{app:LSproof}
In this section, we prove Lemma~\ref{lem:Lyapunov-Schmidt}. We work in a neighborhood of a fixed minimizer $v \in \tr \cM \cap \cB$. All functions are defined on $\partial M$. 
\begin{proof}[Proof of Lemma~\ref{lem:Lyapunov-Schmidt}.]

{\bf Banach spaces.}
Recall that $K \subset \cT_v \cB$ is the kernel of $\nabla^2_\cB Q(v)$. The elements of $K$ solve a linear elliptic equation, hence $K$ is a finite-dimensional subspace of $L^2$ and consists of smooth functions. We denote by $K^\perp$ its orthogonal in $L^2$ and by $K^\perp_{\pm 1/2}$ the closed subspaces of $H^{\pm 1/2}$ consisting of all $ \eta \in H^{\pm 1/2}$ such that $\langle \phi, \eta \rangle = 0  $ for all $\phi \in K$.
In anticipation of the inverse function theorem, we define
$$
h : K \times K^\perp_{1/2} \mapsto \cB \quad \text{ as }
 \quad h(a,b) =\norm{v +a + b}^{-1}_{L^p}( v + a + b).
$$ 
Since $\norm{v}_{L^p}=1$, $h$ is a homeomorphism between some small neighborhood $V\times U$ of the origin and a neighborhood $W$ of $v$ in $\cB$.
Next, we define $G(a,b) : K \times (K^\perp_{1/2}) \mapsto K^{\perp}_{-1/2}$ such that for all $\phi \in K^\perp_{1/2} $,
$ \langle G(a,b), \phi \rangle = \nabla_\cB \tilde Q (h(a,b)) [\phi] $. 

\noindent{\bf The implicit function Theorem.}
Note that $G(0,0) = 0$, since $v$ is a critical point.
We then compute its derivative with respect to the second variable by picking some $\eta \in K^\perp_{1/2}$ and computing
\begin{align*}
    \langle \partial_b G(0,0)[\eta] , \phi \rangle:=   \frac{d }{d t}\left.\langle G(0,t \eta), \phi \rangle \right|_{t=0} 
     &=   \frac{d }{d t}\left. \nabla_\cB \tilde Q \left(\frac{v + t\eta}{\norm{v + t\eta}_{L^p} }\right)[\phi] \right|_{t=0}  \\
     &= \nabla_\cB^2 \tilde Q \left( v \right)[\phi, \eta] \\
     & \ge \mu_1 \norm{\phi}_{H^{1/2}} \norm{\eta}_{H^{1/2}},
\end{align*}
where $\mu_1>0$. This shows that $\partial_b G(0,0)$ is an invertible operator from $K^\perp_{1/2} $ to $K^\perp_{-1/2}$.
By the implicit function theorem, up to fixing a smaller neighborhood $V\times U$ of the origin, the level set $\{(a,b) \in V\times U | G(a,b)= 0\}$ coincides with the graph of a function $F: V \mapsto K^\perp_{1/2}$. The function $F$ satisfies $F(0) = 0$, and 
$$
\nabla F(0) = - \partial_b G(0,0)^{-1} \partial_a G(0,0) = 0,
$$
where we have used that for all $\eta \in K$ and $\phi \in K^\perp_{1/2}$
$$
\langle \partial_a G(0,0)[\eta], \phi \rangle = \nabla_\cB^2 \tilde Q \left( v \right)[\phi, \eta] = 0.
$$
We finally set for $u = h(a,b) \in W$, $P(u) = h(a,F(a))$. This allows to define $P$ and proves point~\ref{item:parametrization}.

To check point~\ref{item:critical_points}, note that by definition, for any critical point $w=h(a,b)\in W$, $G(a,b) = 0$. Hence, $b = f(a)$ for all $(a,b)$ such that $h(a,b) \in W \cap \Crit_1$.

Since $K$ is finite dimensional, up to taking a smaller neighborhood if necessary, we find that
$$
\norm{a - a'}\ge c \norm{h(a,f(a)) - h(a',f(a'))}_{H^{1/2}}  
$$

\noindent{\bf Properties of $q$.}
We define the functional $q: V\subset K \to \R$ by $q(a) := \tilde Q(h(a,F(a)))$. Then we compute for any $\eta \in K$,
$$
\nabla q(a)[\eta] 
%= \frac{d }{d t}\left. \tilde Q(h((a+t\eta), F(a+t\eta))\right|_{t=0} 
= c_a \nabla_\cB \tilde Q(h(a,F(a)))[\eta] + c_a \nabla_\cB \tilde Q(h(a,F(a)))[\nabla F [\eta]] = c_a \nabla_\cB \tilde Q(h(a,F(a)))[\eta]
$$
The number $c_a = \norm{v+ a+ F(a)}_p^{-1}$ is the only term that contributes from $h$, since the other term is perpandicular to $\cT_{h(a,F(a))} \cB$.
On the other hand, for an arbitrary $\phi \in \cT_v \cB$,
$$
\nabla_\cB \tilde Q(h(a,F(a)))[\phi] = \nabla_\cB \tilde Q(h(a,F(a)))[\pi_K \phi]
$$
so we derive the identity
$$
c_a \nabla q(a)[\pi_K \phi]  = \nabla_\cB Q(h(a,F(a)))[\phi],
$$
which settles point~\ref{item:gradients}.

\noindent{\bf Analiticity.}
Now we show that $q : K \mapsto \R$ is analytic in some neighborhood of the origin (point~\ref{item:analiticity}). We will use $h, F, G$ to refer to functions from the previous section restricted to smaller spaces of H\"older regular functions.
Note that $u \mapsto \norm{v+u}_{L^p}
$ is analytic from $C^0$ (or any smaller space) to $\R$ in a sufficiently small neighborhood of the origin since $v$ is bounded away from zero. This shows that $h$ is analytic. Next, 
from the explicit expression in \eqref{eq:NablaQ}, $G$ is real analytic from $C^{2,\alpha}\cap ( K \times K^\perp_{1/2})$ to $C^{1,\alpha} \cap K^\perp_{-1/2}$. 
Schauder estimates for the Neumann problem (e.g \cite[Theorem 6.30]{gilbarg2001elliptic}) then show that $\partial_b G(0,0)$ also has a continuous inverse between $C^{2,\alpha}\cap K^\perp_{1/2}$ and $C^{1
,\alpha}\cap K^\perp_{1/2}$.
By the implicit function theorem, $f$ is analytic from $K$ to $C^{2,\alpha}\cap K^\perp_{1/2}$. Composing $\tilde Q$, $h$ and $F$ finally gives the conclusion.

\noindent{\bf Coercivity.} It remains to establish point~\ref{item:quadr_stability}.
To this end, for some $u \in W$ we write $u= h(a, F(a) + \eta)$ and use Taylor's formula with remainder term to expand
\begin{align*}
    \tilde Q(u) -\tilde Q(P(u)) 
    &= c_a \nabla_\cB Q(h(a,F(a)))[\eta] + \frac{c_a}{2}\nabla_\cB^2(h(a,F(a)+\xi))[\eta, \eta] \\
    &= c_a \inprod{G(a, F(a))}{\eta} +  \frac{1}{2}\nabla_\cB^2(h(a,F(a)))[\eta, \eta] + \cR(\xi,\eta).
\end{align*}
The first term vanishes by the definition of $F$. 
For the final error term, we write $u_\xi = h(a, F(a)+\xi )$ and use the explicit expression of the derivatives in Lemma~\ref{lem:derivatives}
\begin{align*}
2 \cR(\xi,\eta): &=  \nabla_\cB^2(u_\xi)[\eta, \eta] - \nabla_\cB^2(P(u))[\eta, \eta] \\
&= \frac{n}{n-2} \int_{\partial M} |\eta|^2 \left(\tilde Q(u_\xi)u_\xi^{p-2} -\tilde Q(P(u))P(u)^{p-2}\right) \\
& \le C_{\cR}\norm{u_\xi -P(u)}_{H^{1/2}} \norm{\xi}_{H^{1/2}}^2.
\end{align*}
On the other hand, we have for a suitable constant $\mu_1$,
\begin{align*}
 \frac{1}{2}\nabla_\cB^2(h(a,F(a)))[\eta, \eta] 
 \ge \mu_1 \norm{\eta}^2_{H^{1/2}}.
\end{align*}
Thus, by taking a sufficiently small neighborhood $W$, we find for any $u \in W$
$$
C_{\cR}\norm{u_\xi -P(u)}_{H^{1/2}} \le \mu_1/2.
$$
This finally gives
\begin{align*}
    \tilde Q(u) -\tilde Q(P(u)) 
   &\ge  \mu_1/2 \norm{\eta}^2_{H^{1/2}} \\
   &\ge \mu_1/C \norm{u -P(u)}^2_{H^{1/2}}. \qedhere
\end{align*}

\end{proof}

\section{Compactness of the set of minimizers}
\label{app:compactness}
Our goal in this appendix is to prove the following statement.
\begin{proposition} \label{prop:M1_compact}
    If $(M,g)$ is a compact Riemannian manifold with boundary such that $Q(M,\partial M) < Q(B^n,\partial B^{n})$, then the set of minimizers (with normalized boundary trace) $\cM_1 $ is compact in $H^1$. 
\end{proposition}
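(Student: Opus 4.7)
The plan is a concentration-compactness argument in the style of P.-L. Lions adapted to the boundary trace setting; the strict inequality $Q(M,\partial M) < Q(B^n,\partial B^n)$ is used precisely to rule out loss of compactness via bubbling at boundary points.

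First I would take a sequence $(u_k) \subset \cM_1$, so that $\|u_k\|_{L^p(\partial M)} = 1$ with $p = 2(n-1)/(n-2)$ and $Q(u_k) = Q(M,\partial M)$. Using $R_g \ge 0$, the boundedness of $h_g$, the continuous trace Sobolev embedding $H^1(M) \hookrightarrow L^p(\partial M)$ and the compact embedding $H^1(M) \hookrightarrow L^2(\partial M)$, one obtains a uniform bound $\|u_k\|_{H^1(M)} \le C$. Passing to a subsequence, $u_k \rightharpoonup u_\infty$ weakly in $H^1(M)$, strongly in $L^2(M)$, and strongly in $L^q(\partial M)$ for every $q < p$.

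The central step is the concentration-compactness dichotomy on the boundary. By Lions' principle applied to the critical trace embedding, along a further subsequence there exist an at most countable set $\{x_j\}_{j\in J} \subset \partial M$ and positive weights $\nu_j,\mu_j$ with $\mu_j \ge Q(B^n,\partial B^n)\,\nu_j^{2/p}$ such that
\begin{align*}
    |u_k|^p\, d\sigma_g &\rightharpoonup^{*} |u_\infty|^p\, d\sigma_g + \sum_{j\in J} \nu_j\,\delta_{x_j}, \\
    c_n |\nabla u_k|^2\, dv_g &\rightharpoonup^{*} d\mu \ge c_n|\nabla u_\infty|^2\, dv_g + \sum_{j\in J} \mu_j\,\delta_{x_j}.
\end{align*}
The constant $Q(B^n,\partial B^n)$ appears as the local sharp constant because a blow-up at $x_j\in\partial M$ flattens the boundary and reduces the question to the sharp trace Sobolev inequality on $\R^n_+$, conformally equivalent to $Q(B^n,\partial B^n)$ by Section~\ref{sectionball}; interior bubbles are excluded since they would strictly increase the numerator of $Q(u_k)$ without contributing to its boundary denominator. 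The lower-order terms $\int R_g u_k^2$ and $\int h_g u_k^2$ pass to the limit strongly by the compact embeddings above.

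Setting $A := \int_{\partial M}|u_\infty|^p\,d\sigma_g$ and $B := \sum_{j}\nu_j$, the normalization gives $A + B = 1$, and passing to the limit in $Q(u_k) = Q(M,\partial M)$ while bounding the non-concentrated part from below by $Q(M,\partial M)\,A^{2/p}$ yields
\[
    Q(M,\partial M) \ge Q(M,\partial M)\, A^{2/p} + Q(B^n,\partial B^n)\, B^{2/p}.
\]
Since $2/p < 1$, strict concavity of $t \mapsto t^{2/p}$ gives $A^{2/p} + B^{2/p} > A + B = 1$ whenever $A, B \in (0,1)$; combined with $Q(M,\partial M) < Q(B^n,\partial B^n)$ this forces $B = 0$ and hence $A = 1$, so $u_\infty \in \cM_1$. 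Matching $\lim Q(u_k) = Q(u_\infty)$ then upgrades weak to strong $H^1$ convergence via a Radon-Riesz argument. The main obstacle is the concentration step itself: rigorously identifying $Q(B^n,\partial B^n)$ as the local sharp constant at each boundary concentration point through the blow-up, and confirming that no other modes of non-compactness arise. The strict-inequality hypothesis is then sharp, since equality on the ball corresponds to a genuinely non-compact minimizing set via the action of the conformal group.
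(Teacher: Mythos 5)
Your argument is correct in outline and belongs to the same family as the paper's proof (both are, at heart, concentration-compactness arguments that leverage the strict inequality against $Q(B^n,\partial B^n)$), but the technical implementation differs. You invoke the full Lions measure-theoretic dichotomy with Dirac masses at boundary points and the local estimate $\mu_j \geq Q(B^n,\partial B^n)\,\nu_j^{2/p}$, then rule out concentration via strict subadditivity of $t \mapsto t^{2/p}$. The paper instead avoids the measure-theoretic machinery altogether: it decomposes $u_n = u + v_n$ with $v_n \rightharpoonup 0$, applies the Brezis--Lieb lemma to split both the Dirichlet energy and the $L^p(\partial M)$ norm, and then proves directly the lower bound $\liminf Q(v_n) \geq Q(B^n,\partial B^n) - \delta/2$ by an IMS localization argument together with an averaged family of partitions of unity (borrowed from \cite{benguria2020existence}) to control the cross terms. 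The final step is then the same subadditivity argument you use, phrased via $\norm{u}_p^{p/2} + \lim\norm{v_n}_p^{p/2} = 1$.

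Two remarks. First, you correctly flag as the main obstacle the identification of $Q(B^n,\partial B^n)$ as the local constant at boundary concentration points; this boundary-trace version of Lions' lemma is not off-the-shelf, and the paper's localization argument is precisely a self-contained substitute for it, so you have located exactly the right missing ingredient. Second, you invoke $R_g \geq 0$ to bound the sequence in $H^1$; this is not actually needed here (nor assumed in the proposition), since for a minimizing sequence with $\norm{u_k}_{L^p(\partial M)}=1$ the lower-order curvature terms are absorbed by the compact embeddings $H^1(M)\hookrightarrow L^2(M)$ and $H^1(M)\hookrightarrow L^2(\partial M)$ without a sign condition on $R_g$.
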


Escobar \cite{escobar1992conformal} proves that the strict inequality implies the existence of a minimizer. The method he uses is to approximate the minimizer by optimizers of a variational principle with a subcritical exponent and show that one may take the limit. This does not directly imply the compactness.
 Since we could not find the simpler proof of compactness in $H^1$ in the literature, we give a direct proof of Proposition~\ref{prop:M1_compact}

\begin{proof}
We will show that any minimizing sequence $(u_n) \subset \cB$ has a subsequence that converges, in $H^1$, to a minimizer in $\cM_\cB$.
Define $\delta = Q(B^n,\partial B^{n}) - Q(M,\partial M) > 0$ and fix such a minimizing sequence, such that 
$$
Q(M,\partial M) = \lim_{n \to \infty} Q(u_n) = \lim_{n \to \infty} \int_M  |\nabla u_n|^2 + R_g u_n^2 .
$$
Since $u_n$ is bounded in $H^1$ and $L^p$, after extracting a (non-displayed) subsequence we can assume that it converges weakly in $H^1$ and $L^p$ to some $u$. We will use \cite{BrezisLiebConvergence} to show that this convergence is strong, which implies that $u \in \cB$ and that $u$ is a minimizer.
Writing $u_n = u + v_n $, we obtain by using the main result in \cite{BrezisLiebConvergence} and the simple bound $(a+b)^{p/2 }\ge a^{p/2} + b^{p/2}$ gives
\begin{align*}
    Q(M,\partial M)^{p/2}  = \lim_{n \to \infty} Q(u_n)^{p/2} 
    & = \lim_{n \to \infty} \left(\int_M  |\nabla u|^2 + R_g u^2 + \int_M  |\nabla v_n|^2 \right)^{p/2} \\
    &\le  \norm{u}_p^{p/2} Q(u)^{p/2} + \lim_{n \to \infty}  \norm{v_n}_p^{p/2} Q(v_n)^{p/2}.
\end{align*}
The curvature term in the definition of $Q(v_n)$ is irrelevant in the limit since the weak $H^1$ convergence implies strong convergence in $L^2(M)$.
Below, we will show that for any sequence converging to zero weakly,
\begin{equation} \label{eq:lower_bound_Qgn}
    \liminf_{n \to \infty} Q(v_n) \ge Q(B^n,\partial B^{n}) - \delta/2.
\end{equation}
This implies that $\lim_{n \to \infty}  \norm{v_n}_p = 0$, wich in turn implies that $\lim_{n \to \infty}  \norm{\nabla v_n}_2 = 0$ and concludes the proof.

In order to prove \eqref{eq:lower_bound_Qgn}, we use a localization argument.
Take some $\epsilon > 0$ sufficiently small.
Then, we can assume that $M_\epsilon := \{x \in M | \dist(x,\partial M) < \epsilon\}$ is isomorphic to $(\partial M \times [0,\epsilon), g_{\partial M} \times 1 + \cO(\epsilon))$.
Take a smooth cut-off functions $\chi, \phi$ such that $\chi^2  + \phi^2 = 1$, $\chi$ is supported in $M_\epsilon$ and $\phi$ in $\operatorname{int} (M)$, such that $\int_{\partial M} v_n^p = \int_{\partial M} (\chi v_n)^p $. For the gradient term,
by the IMS localization formula,
$$
\int_M  |\nabla v_n|^2 = \int_M  \left(|\nabla \chi v_n|^2 + |\nabla \phi v_n|^2 \right) - \int_M \left(|\nabla \phi|^2 + |\nabla \chi|^2  \right)v_n^2
$$
Dropping the interior term and using the strong $L^2$ convergence to eliminate the error term gives a first lower bound
$$
\liminf_{n \to \infty} Q(v_n) \ge \liminf_{n \to \infty}  \frac{\int_M |\nabla \chi v_n|^2}{\norm{\chi v_n}_{L^p(\partial M)}^{2}}.
$$
Now, we consider a further partition of unity to compare the variational quotient for functions supported close to the boundary to functions supported in small patches, where the metric can be taken close to the one for the half-space. 
For a given $\epsilon > 0$, we fix a partition of unity $\chi_j : M \mapsto [0,1]$ such that $\sum_j \chi_j^2 = 1$ for each $x \in M_\epsilon$. Furthermore, each $\phi_j$ is supported in a $B(x_j,2 \epsilon)$ for some $x_j \in \partial M$. By choosing coordinates adapted at the point $x_j$ in a standard way, we find
$$
\int_M |\nabla \phi_j v_n|^2 \ge ( Q(B^n,\partial B^{n})- \cO(\epsilon))  \norm{\phi_j v_n}_{L^p(\partial M)}^{2} - \cO(\epsilon) \norm{\phi_j v_n}_{L^2(M)}^2. 
$$
Using once more the IMS localization formula and the $L^2$ convergence, we obtain
\begin{align*}
  \liminf_{n\to \infty} \int_M |\nabla v_n|^2 
  & \ge \liminf_{n\to \infty} \sum_{j} \int_M |\nabla \phi_j v_n|^2 \\
   & \ge ( Q(B^n,\partial B^{n})- \cO(\epsilon)) \liminf_{n\to \infty} \sum_j \left(\int_{\partial M} \phi_j^p v_n^p \right)^{2/p}\\
   & \ge ( Q(B^n,\partial B^{n})- \cO(\epsilon)) \liminf_{n\to \infty}  \left(\sum_j\int_{\partial M} \phi_j^p v_n^p \right)^{2/p}\\
   & = ( Q(B^n,\partial B^{n})- \cO(\epsilon)) \liminf_{n\to \infty}  \left(\norm{v_n}_{L^p(\partial M)}^p -\int_M\sum_j (\phi_j^2 -\phi_j^p) v_n^p \right)^{2/p}.
\end{align*}
Note that $\sum_j (\phi_j^2 -\phi_j^p) \in [0,1)$ and that this quantity is supported in $T:= \cup_j \phi_j^{-1}((0,1)) \cap \partial M$. However, it is not necessarily a small quantity for an arbitrary partition of unity.
To solve this problem, a solution, detailed for instance in \cite[Lemma 2.1]{benguria2020existence} for the Euclidean case, is to consider a family of partitions of unity $\phi_{j,v}$ such that for each $v$, $|T(v)|=|\cup_j \phi_{j,v}^{-1}((0,1)) \cap \partial M| \le \epsilon |\partial M|$ and such that, when averaging over $v$ with a suitable probability measure $\di \mu (v)$, for each $x \in \partial M$,
$\int 1_{T(v)}(x) \di \mu(v) \le \cO(\epsilon)$.
This allows finally to bound
\begin{align*}
  \liminf_{n\to \infty} \int_M |\nabla v_n|^2 
    & \ge ( Q(B^n,\partial B^{n})- \cO(\epsilon)) \liminf_{n\to \infty} \int \left(1 -\int_M1_{T(v)} v_n^p \right)^{2/p}\di \mu(v) \\
     & \ge ( Q(B^n,\partial B^{n})- \cO(\epsilon)) \liminf_{n\to \infty}  \left(1 -\int \int_M 1_{T(v)}(x) v_n^p \di \mu(v)\right) \\
      & \ge ( Q(B^n,\partial B^{n})- \cO(\epsilon))  \liminf_{n\to \infty}\norm{v_n}_{L^p(\partial M)}. \qedhere
\end{align*}
\end{proof}

\end{document}